\documentclass[10pt]{article}
\usepackage{defs}
\usepackage[charsperline = 94, theme = grayscale]{jlcode}
\usepackage{subcaption}

\newcommand{\papertitle}{%
  Complexity of trust-region methods with potentially unbounded Hessian approximations for smooth and nonsmooth optimization
}


\hypersetup{
  pdftitle=\papertitle,
  pdfauthor={Geoffroy Leconte and Dominique Orban},
  pdfsubject={Nonsmooth Regularized Optimization},
  pdfkeywords={},
}

\usepackage{newunicodechar}
\newunicodechar{Δ}{\ensuremath{\Delta}}
\newunicodechar{ξ}{\ensuremath{\xi}}
\newunicodechar{ν}{\ensuremath{\nu}}
\newunicodechar{√}{\ensuremath{\surd}}
\newunicodechar{ρ}{\ensuremath{\rho}}
\newunicodechar{ₖ}{\ensuremath{_k}}

\title{\papertitle}
\author{%
  Geoffroy Leconte\footnote{%
    GERAD and Department of Mathematics and Industrial Engineering, Polytechnique Montr\'eal. E-mail: \href{mailto:geoffroy.leconte@polymtl.ca}{geoffroy.leconte@polymtl.ca}.
  }
  \and
  Dominique Orban\footnote{%
    GERAD and Department of Mathematics and Industrial Engineering, Polytechnique Montr\'eal. E-mail: \href{mailto:dominique.orban@gerad.ca}{dominique.orban@gerad.ca}.
  }
  \thanks{Research supported by an NSERC Discovery grant.}
}

\begin{document}
\maketitle
\thispagestyle{mytitlepage}

\begin{abstract}
  We develop a worst-case evaluation complexity bound for trust-region methods in the presence of unbounded Hessian approximations.
  We use the algorithm of \citet{aravkin-baraldi-orban-2022} as a model, which is designed for nonsmooth regularized problems, but applies to unconstrained smooth problems as a special case.
  Our analysis assumes that the growth of the Hessian approximation is controlled by the number of successful iterations.
  We show that the best known complexity bound of \(\epsilon^{-2}\) deteriorates to \(\epsilon^{-2/(1-p)}\), where \(0 \leq p < 1\) is a parameter that controls the growth of the Hessian approximation.
  The faster the Hessian approximation grows, the more the bound deteriorates.
  We construct an objective that satisfies all of our assumptions and for which our complexity bound is attained, which establishes that our bound is sharp.
  To the best of our knowledge, our complexity result is the first to consider potentially unbounded Hessians and is a first step towards addressing a conjecture of \citet{powell-2010} that trust-region methods may require an exponential number of iterations in such a case.
  Numerical experiments conducted in double precision arithmetic are consistent with the analysis.
\end{abstract}


\pagestyle{myheadings}

\section{Introduction}

We consider the nonsmooth regularized problem
\begin{equation}%
  \label{eq:nlp}
  \minimize{x \in \R^n} \ f(x) + h(x)
  \quad \st \ \ell \leq x \leq u,
\end{equation}
where \(\ell \in (\R \cup \{-\infty\})^n\), \(u \in (\R \cup \{+\infty\})^n\) with \(\ell \leq u\) componentwise, \(f: \R^n \to \R\) is continuously differentiable on an open set containing the feasible set \([\ell, u]\) of~\eqref{eq:nlp}, and \(h: \R^n \to \R \cup \{+\infty\}\) is proper and lower semicontinuous (lsc).
A component \(\ell_i = -\infty\) or \(u_i = +\infty\) indicates that \(x_i\) is unbounded below or above, respectively.
Both \(f\) and \(h\) may be nonconvex.
The nonsmooth regularizer \(h\) is often used to identify a local minimizer of \(f\) with desirable features, such as sparsity.

Algorithms used to solve~\eqref{eq:nlp} are often based on the proximal-gradient method \citep{lions-mercier-1979,fukushima-mine-1981}.
The algorithm that we consider here is the trust-region method (TR) of \citet{aravkin-baraldi-orban-2022}, which improves upon the proximal-gradient method by constructing a model of \(f\) and a model of \(h\) at each iteration in order to compute a step, in the spirit of traditional trust-region methods \citep{conn-gould-toint-2000}.
To the best of our knowledge, it is the only trust-region method for~\eqref{eq:nlp} that allows both \(f\) and \(h\) to be nonconvex, and that only assumes that \(h\) is proper lsc.
Typically, the model of \(f\) is a quadratic about the current iterate, and we denote its Hessian by \(B_k\); the latter may be the Hessian of \(f\) if it exists, or an approximation thereof.
TR was developed under the assumption that \(\{B_k\}\) remains bounded, a common, but sometimes restrictive, assumption.
A worst-case evaluation complexity bound for a stationarity measure to drop below \(\epsilon \in (0, 1)\) of \(O(\epsilon^{-2})\) results, which matches the best possible complexity bound in the smooth case, i.e., when \(h = 0\) \citep{cartis-gould-toint-2022}.

In the present paper, we examine the situation where \(\{B_k\}\) is allowed to grow unbounded.
We impose a bound on the growth of \(\|B_k\|\) in terms of the number of successful iterations that is slightly more restrictive than bounds used in smooth optimization to establish global convergence---see below.
Our tighter growth control, however, allows us to formalize a worst-case evaluation complexity bound, which we then show to be tight.
Specifically, we show that the best known complexity bound of \(O(\epsilon^{-2})\) deteriorates to \(O(\epsilon^{-2/(1-p)})\), where \(0 \leq p < 1\) is a parameter that controls the growth of \(\|B_k\|\).
To the best of our knowledge, this is the first formal worst-case analysis in the case of potentially unbounded \(B_k\).

A Julia implementation of TR is available as part of the RegularizedOptimization.jl package \citep{baraldi-orban-regularized-optimization-2022}.
Our findings also apply to Algorithm TRDH of \citet{leconte-orban-2024}, which is similar to TR, but uses diagonal Hessian approximations to compute a step without recourse to a subproblem solver.

Unbounded, or potentially unbounded, Hessians are not uncommon in applications.
A prime example is interior-point methods for bound-constrained optimization.
Consider the minimization of a twice differentiable objective \(\phi: \R^n \to \R\) subject to simple bounds \(x \geq 0\).
Primal interior-point methods \citep{fiacco-mccormick-1968} consist in applying Newton's method to a sequence of log-barrier subproblems whose objective is \(\phi(x) - \mu \sum_i \log(x_i)\) where \(\mu > 0\) is a barrier parameter that is eventually driven to zero.
Such methods maintain \(x > 0\) implicitly but the barrier objective Hessian is \(\nabla^2 \phi(x) + \mu X^{-2}\), where \(X := \diag(x)\).
For any \(\mu > 0\), the barrier Hessian is unbounded as any component of \(x\) approaches a bound, which is often where a solution is located.
Primal methods have long been superseded by the better-behaved primal-dual methods---see, e.g., \citep{forsgren-gill-wright-2022} and references therein for an overview of the extensive literature on the subject---in which the barrier Hessian is replaced with \(\nabla^2 \phi(x) + X^{-1} Z\), where \(Z := \diag(z)\) and \(z\) is an approximation of the vector of Lagrange multipliers for \(x \geq 0\).
Even though the primal-dual Hessian does not grow unbounded as fast as the primal Hessian, it nevertheless remains unbounded as any component of \(x\) approaches a bound.
In order to converge, interior-point methods rely on extra mechanisms that prevent components of \(x\) from approaching a bound too fast unless there are indications that a solution is nearby and \(\mu\) is close to zero.
In spite of those mechanisms, \(x\) must be allowed to approach bounds, and, therefore, the primal and primal-dual Hessians must be allowed to grow unbounded.
Although primal-dual interior-point methods can be shown to have excellent worst-case complexity bounds in convex optimization \citep{nesterov-nemirovskii-1994}, no such general result is known for nonconvex problems.

Another prime example, often cited in the literature, is when \(B_k\) results from a secant approximation \citep{dennis-more-1977}.
\citet[\S\(8.4\)]{conn-gould-toint-2000} suggest that for the BFGS and SR1 approximations, \(B_k\) could potentially grow by at most a constant at each update, though it is not clear whether that bound is attained.
This point is developed further in the related research below.

The paper is organized as follows.
\Cref{sec:context} provides the nonsmooth analysis background necessary to understand the algorithm of \citet{aravkin-baraldi-orban-2022}, a description of how models are constructed at each iteration, and a formal statement of the algorithm.
In \Cref{sec:ubnd-qn}, we establish convergence and a worst-case evaluation complexity bound under the assumption that the growth of the model Hessian is controlled by a function of the number of successful iterations, i.e., iterations in which a step is accepted.
We show in \Cref{sec:sharp} that the worst-case bound is indeed attained, by performing an analysis similar to that of \citep[Theorem~\(2.2.3\)]{cartis-gould-toint-2022}.
In \Cref{sec:num-verif}, we construct an explicit function that attains the bound and validate our findings numerically.
We provide concluding comments and perspectives in \Cref{sec:discussion}.

\subsection*{Related research}

We do not provide an extensive review of trust-region approaches for smooth optimization, but refer the interested reader to \citep{conn-gould-toint-2000} for a thorough account, as well as a number of generalizations.

We begin by reviewing milestones in the convergence analysis of trust-region methods with potentially unbounded model Hessians.
\citet{powell-1975} first showed convergence of a trust-region algorithm for smooth optimization that allows unbounded Hessian approximations \(B_k\).
Specifically, he assumes that there exist nonnegative \(\alpha\) and \(\beta\) such that \(\|B_k\| \leq \alpha + \beta \sum_{j=0}^{k-1} \|s_j\|\), where \(s_j\) is the trust-region step at iteration \(j\).
Under that and other standard assumptions, he established that \(\liminf \|\nabla f(x_k)\| = 0\).
\citeauthor{powell-1975} hints that his motivation lies in Hessian approximations arising from secant updates \citep{dennis-more-1977}.
To the best of our knowledge, it is not known whether secant approximations remain bounded.
However, \citet{fletcher-1972} establishes that the quasi-Newton update that bears Powell's name, the Powell symmetric Broyden update, derived in \citep{powell-1970}, satisfies the bound above.

Secant, and, in particular, quasi-Newton, methods are among the most widely employed methods in smooth optimization.
Yet, for lack of a boundedness result, no existing complexity analysis applies to them.
Like \citet{powell-1975}, our main motivation is to provide a first worst-case complexity result that may apply to them.
Whether or not certain quasi-Newton approximations satisfy our assumption on the growth of model Hessians remains to be established, even for convex problems.
Nevertheless, our result is a first step forward.

\citet{powell-1984} refines his earlier analysis by showing global convergence under the weaker assumption \(\|B_k\| \leq \alpha + \beta k\).
Under the weaker yet assumption
\begin{equation}
  \label{eq:asm-sum-invmaxBj-inf}
  \sum_{k=0}^{\infty} \frac{1}{1 + \max_{0 \leq j \leq k} \|B_j\|} = \infty,
\end{equation}
which is hinted at in the proofs of \citet{powell-1984}, \citet{toint-1988} shows that global convergence is preserved.
The condition is necessary but not sufficient; \citet{toint-1988} provides an example for which~\eqref{eq:asm-sum-invmaxBj-inf} fails to hold and on which trust-region methods may fail to converge.

When \(f\) is convex with uniformly bounded Hessian, \citet[\S\(8.4\)]{conn-gould-toint-2000} indicate that the BFGS update satisfies \(\|B_{k+1}\| \le \|B_k\| + \beta\) for some \(\beta \geq 0\).
Therefore, \(\|B_{k+1}\| \leq \|B_0\| + (k+1) \beta\), and the assumption of \citet{powell-1984}, and hence~\eqref{eq:asm-sum-invmaxBj-inf}, are satisfied.
The SR1 update with safeguards satisfies a similar inequality without the convexity assumption.

Under such a growth assumption, \citet{powell-2010} surmises in his concluding remarks that trust-region methods may require a ``monstrous'' number of iterations; which he projects to be exponential.

Because quasi-Newton approximations are typically only updated on successful iterations, i.e., when a trial step is accepted, we believe that the authors above mean that \(\|B_{k+1}\| \leq \|B_0\| + |\mathcal{S}_{k+1}| \beta\) instead, where \(|\mathcal{S}_{k+1}|\) is the number of successful iterations until iteration \(k+1\).
Our complexity result, though it does not encompass the latter bound, approaches it by imposing instead \(\|B_{k+1}\| \leq \|B_0\| + |\mathcal{S}_{k+1}|^p \beta\) for \(0 \leq p < 1\), and is therefore a first step towards validating \citeauthor{powell-2010}'s conjecture.

\citet{carter-1987} presents procedures to safeguard Hessian approximations in trust-region algorithms for smooth problems.
The goal of these procedures is to satisfy the \emph{uniform predicted decrease condition}
\begin{equation*}
  \varphi_k(x_k) - \varphi_k(x_{k+1}) \ge \tfrac{1}{2}\beta_1 \|\nabla f(x_k)\| \min \left(\Delta_k, \frac{\|\nabla f(x_k)\|}{\beta_0} \right),
\end{equation*}
where \(\varphi_k\) is a model of \(f\) about iterate \(x_k\), \(\Delta_k > 0\) is the trust-region radius, \(\beta_0 > 0\), and \(\beta_1 > 0\).
When \(\|B_k\| \le \beta_0\) for all \(k\), this condition is satisfied, but the author shows that it can also be satisfied under milder assumptions.
\citeauthor{carter-1987}’s procedures are used to correct \(B_k\) so that such assumptions hold.

We now review determinant complexity analyses of trust-region and related methods for smooth optimization.
\citet{cartis-gould-toint-2010} show that the steepest descent method and Newton's method for smooth problems may converge in as many as \(O(\epsilon^{-2})\) iterations, and that the bound is sharp for the steepest descent method.
The analysis assumes that the Hessian remains uniformly bounded.
In addition, they prove that it is possible to construct an example where Newton's method is arbitrarily slow when allowing unbounded Hessians.

Our main contribution is to establish that TR, the trust-region algorithm of \citep{aravkin-baraldi-orban-2022}, may converge in as many as \(O(\epsilon^{-2 / (1-p)})\) iterations, where \(p \in [0,1)\) is a parameter that controls the growth of the model Hessian---the larger \(p\), the larger the allowed growth.
Because \(\epsilon^{-2 / (1-p)} \to +\infty\) as \(p \nearrow 1\), our results reinforce that of \citet{cartis-gould-toint-2010} and makes it more precise.
Our analysis applies to smooth optimization---indeed, the example that we construct to establish sharpness of the complexity bound is smooth---but it is general enough to apply to~\eqref{eq:nlp}.

\citet[Section~\(2.2\)]{cartis-gould-toint-2022} show that the steepest-descent algorithm with backtracking Armijo linesearch results in an \(O(\epsilon^{-2})\) complexity bound, and a function is constructed by polynomial interpolation to prove that the bound is sharp, with a technique that is different from that of \citep{cartis-gould-toint-2010}.
The rest of their book reviews complexity analyses for trust-region and regularization methods, always under the assumption that the Hessian remains bounded.

The complexity of other methods for smooth optimization was subsequently analyzed using techniques similar to those of \citep{cartis-gould-toint-2010}.
The Adaptive Regularization with Cubics algorithm (ARC, or AR2 because it uses second-order derivatives) \citep{cartis-gould-toint-2011b,dussault-migot-orban-2023} minimizes at each iteration the model
\begin{equation}
  \label{eq:model-ar2}
  \varphi_k(x_k + s) = f(x_k) + \nabla f(x_k)^T s + \tfrac{1}{2}s^T B_k s + \tfrac{1}{3}\sigma_k \|s\|^3,
\end{equation}
where \(B_k\) must remain bounded.
It is known to require at most \(O(\epsilon^{-3/2})\) iterations to reach \(\|\nabla f(x_k)\| \le \epsilon\), and this bound is sharp \citep{nesterov-polyak-2006,cartis-gould-toint-2011b}.
\citet{curtis-robinson-samadi-2017} and \citet{martinez-raydan-2017} present modified trust-region algorithms with bounded model Hessians to solve nonconvex smooth problems that also have a complexity bound of \(O(\epsilon^{-3/2})\).


\citet{cartis-gould-toint-2020} show that Algorithm ARp for smooth problems, a generalization of ARC using a model of order \(p \ge 1\), requires at most \(O(\epsilon^{-(p + 1) / p})\) iterations to satisfy \(\|\nabla f(x_k)\| \le \epsilon\), and that the bound is sharp.
They introduce a generalization of the first-order stationarity measure \(\|\nabla f(x_k)\| \le \epsilon\) to \(q\)-th order stationarity, where \(q \in \N_0\), and show that at most \(O(\epsilon^{-(p + 1) / (p - q + 1)})\) evaluations of the objective and the derivatives are required with this measure.
They require that the \(p\)-th derivative of \(f\) be globally Hölder continuous.
For \(p = 2\) and \(q = 1\), we recover the bound of \citep{cartis-gould-toint-2011b}.

For smooth nonconvex problems with bounded Hessians, the number of iterations required to satisfy the conditions on the gradient \(\|\nabla f(x_k)\| \le \epsilon_g\) and on the smallest eigenvalue of the Hessian \(\lambda_{\min}(\nabla^2 f(x_k)) \ge -\epsilon_H\), where \(\epsilon_g\), \(\epsilon_H \in (0,1)\), have also been studied.
\citet{cartis-gould-toint-2012} show that their trust-region algorithm needs at most \(O(\max \{\epsilon_g^{-2}\epsilon_H^{-1}, \epsilon_H^{-3}\})\) iterations to satisfy these conditions, and \(O(\max \{\epsilon_g^{-3/2}, \epsilon_H^{-3}\})\) iterations for ARC\@.
The latter bound is also obtained for the trust-region algorithms in \citep{curtis-robinson-samadi-2017,martinez-raydan-2017}.
\citet{royer-wright-2018} use a second-order linesearch method to obtain the bound \(O(\max\{\epsilon_g^{-3}\epsilon_H^3, \epsilon_g^{-3/2}, \epsilon_H^{-3}\})\).

\citet{aravkin-baraldi-orban-2022} provide an overview of the literature on convergence of methods for nonsmooth optimization, and we now summarize the review with an eye to trust-region methods.
Methods prior to their work were restricted to special cases.
Most were developed for \(f = 0\), i.e., in a purely nonsmooth context.
\citet{yuan-1985} considers a nonsmooth term of the form \(h(c(x))\), where \(c \in \mathcal{C}^1\) and convex.
\citet{dennis-li-tapia-1995} take \(f = 0\) and assume that \(h\) is Lipschitz-continuous.
\citet{qi-sun-1994} relax the assumptions of \citep{dennis-li-tapia-1995} to \(h\) locally Lipschitz-continuous with bounded level sets.
\citet{martinez-moretti-1997} add treatment of equality constraints to the method of \citet{qi-sun-1994}.
The only prior trust-region method for \(f \neq 0\) and more general \(h\) that we are aware of is that of \citet{kim-sra-dhillon-2010}, who assume that \(f\) and \(h\) are convex.
None of those works provides a complexity analysis.

Finally, we review complexity analyses of trust-region methods for nonsmooth problems.
\citet{cartis-gould-toint-2011c} describe a first-order trust-region method and a quadratic regularization algorithm to solve nonsmooth problems of the form
\begin{equation}
  \label{eq:nlp-composite}
  \minimize{x \in \R^n} \ f(x) + h(c(x)),
\end{equation}
where \(f\) and \(c\) are continuously differentiable and may be nonconvex, and \(h\) is convex but may be nonsmooth, and is Lipschitz-continuous.
Note that~\eqref{eq:nlp-composite} is a special case of~\eqref{eq:nlp}, but the convexity assumption on \(h\) is strong.
They show that both algorithms have a complexity bound of \(O(\epsilon^{-2})\).
\citet{grapiglia-yuan-yuan-2016} provide a unified convergence theory for smooth optimization that has trust-region methods as a special case.
They also generalize the results of \citep{cartis-gould-toint-2011c} under the same assumptions.

\citet{aravkin-baraldi-orban-2022} describe a proximal trust-region algorithm to solve~\eqref{eq:nlp} using bounded model Hessians.
They also present a quadratic regularization variant.
They establish that their criticality measure is smaller than \(\epsilon\) in at most \(O(\epsilon^{-2})\) iterations for both algorithms.
\citet{aravkin-baraldi-orban-2022b} adapt these algorithms to solve nonsmooth regularized least-squares problems and obtain the same complexity bound under the assumption that the residual Jacobian is uniformly bounded.
As far as we know, the complexity analyses of \citep{aravkin-baraldi-orban-2022,aravkin-baraldi-orban-2022b} make the weakest assumptions on \(h\) so far, that \(h\) be lsc.

\citet{baraldi-kouri-2022} also describe a proximal trust-region algorithm for convex \(h\).
In addition, they allow the use of inexact objective and gradient evaluations.
As \citet{toint-1988} in the smooth case, they assume that
\begin{equation}
  \label{eq:asm-sum-invmaxomegaj-inf}
  \sum_{k=0}^{\infty} \frac{1}{1 + \max_{0 \leq j \leq k} \omega_j} = \infty,
\end{equation}
where
\[
  \omega_k = \sup \left\{\frac{2}{\|s\|^2} | \varphi_k(x_k + s) - \varphi_k(x_k) - \nabla \varphi_k(x_k)^T s| \mid 0 < \|s\| \le \Delta_k \right\},
\]
and \(\varphi_k\) is a smooth model of \(f\) about \(x_k\).
In particular, if \(\varphi_k\) is a second-order Taylor approximation at \(x_k\) with Hessian approximation \(B_k\), \(\omega_k = \sup \left\{s^T B_k s / \|s\|^2 \mid 0 < \|s\| \le \Delta_k \right\}\), so that~\eqref{eq:asm-sum-invmaxomegaj-inf} is reminiscent of~\eqref{eq:asm-sum-invmaxBj-inf}.
If \(\omega_k\) is bounded independently of \(k\), which is the case for bounded Hessian approximations, they show that their algorithm enjoys a complexity bound of \(O(\epsilon^{-2})\).

\citet{cartis-gould-toint-2020b} present a similar concept of high-order approximate minimizers to that of \citep{cartis-gould-toint-2020} for nonsmooth problems such as~\eqref{eq:nlp-composite} where \(f\), \(c\) are smooth, and \(h\) is nonsmooth but Lipschitz-continuous.
They present an algorithm of adaptive regularization of order \(p\), and derive several bounds depending on the properties of~\eqref{eq:nlp-composite} and of the order of the desired approximate minimizer.
In particular, for \(q = 1\) and convex \(h\), their complexity bound is \(O(\epsilon^{-(p+1) / p})\), and they show that it is sharp.

\subsection*{Contributions}

Our main contribution is a sharp \(O(\epsilon^{-2/(1-p)})\) worst-case evaluation complexity bound for a class of trust-region algorithms for smooth and nonsmooth optimization when model Hessians \(B_k\) are allowed to grow according to \(\|B_k\| = O(|\mathcal{S}_k|^p|)\), where \(|\mathcal{S}_k|\) is the number of successful iterations up to iteration \(k\), and \(0 \leq p < 1\).
Our analysis builds upon the intuition of \citet{powell-2010} and Hermite interpolation-based tools inspired from those of \citet{cartis-gould-toint-2022}.
The trust-region algorithm, \Cref{alg:tr-nonsmooth}, is a minor variation on that of \citet{aravkin-baraldi-orban-2022} to allow for potentially unbounded model Hessians.
To the best of our knowledge, previous literature does not provide a complexity analysis in the case of potentially unbounded model Hessians.
Our result applies to nonconvex nonsmooth regularized optimization problems of the form~\eqref{eq:nlp}, and to smooth optimization as a special case.
Indeed, the example constructed in \Cref{sec:sharp} to establish sharpness is for smooth optimization, i.e., \(h = 0\).
Finally, we provide new results that indicate conditions under which limit points of the sequence of iterates are stationary.

\subsection*{Notation}

\(\B\) denotes the closed unit ball at the origin in a certain norm dictated by the context, \(\Delta \B\) is the ball of radius \(\Delta > 0\) centered at the origin, and \(x + \Delta \B\) is the ball of radius \(\Delta > 0\) centered at \(x \in \R^n\).
For \(A \subseteq \R^n\), the indicator of \(A\) is \(\chi(\cdot \mid A): \R^n \to \R \cup \{+\infty\}\) defined as \(\chi(x \mid A) = 0\) if \(x \in A\) and \(+\infty\) otherwise.
If \(A \neq \varnothing\), \(\chi(\cdot \mid A)\) is proper.
If \(A\) is closed, \(\chi(\cdot \mid A)\) is lsc.
For a finite set \(A \subset \N\), we denote by \(|A|\) its cardinality.
If \(f_1\) and \(f_2\) are two positive functions of \(\epsilon > 0\), we say that \(f_1(\epsilon) = O(f_2(\epsilon))\) if there exists a constant \(C > 0\) such that \(f_1(\epsilon) \leq C f_2(\epsilon)\) for all \(\epsilon > 0\) sufficiently small.
\(\|\cdot\|\) denotes the \(2\)-norm on \(\R^n\), and its associated induced matrix spectral norm on \(\R^{n \times n}\) is also denoted \(\|\cdot\|\).

\section{Context}%
\label{sec:context}

\subsection{Background}

We recall relevant concepts of variational analysis---see, e.g., \citep{rockafellar-wets-1998}.

Consider \(\phi: \R^n \to \widebar{\R}\) and \(\bar{x} \in \R^n\) with \(\phi(\bar{x}) < \infty\).
The \emph{Fr\'echet subdifferential} of \(\phi\) at \(\bar{x}\) is the closed convex set \(\widehat{\partial} \phi(\bar{x})\) of \(v \in \R^n\) such that
\[
  \liminf_{\substack{x \to \bar{x} \\
      x \neq \bar{x}}} \frac{\phi(x) - \phi(\bar{x}) - v^T (x - \bar{x})}{\|x - \bar{x}\|} \geq 0.
\]

The \emph{limiting subdifferential} of \(\phi\) at \(\bar{x}\) is the closed, but not necessarily convex, set \(\partial \phi(\bar{x})\) of \(v \in \R^n\) for which there exist \(\{x_k\} \to \bar{x}\) and \(\{v_k\} \to v\) such that \(\{\phi(x_k)\} \to \phi(\bar{x})\) and \(v_k \in \widehat{\partial} \phi(x_k)\) for all \(k\).
\(\widehat{\partial} \phi(\bar{x}) \subset \partial \phi(\bar{x})\) always holds.

We say that \(\bar{x}\) is \emph{stationary} for the problem of minimizing \(\phi\) if \(0 \in \partial \phi(\bar{x})\).

The \emph{horizon subdifferential} of \(\phi\) at \(\bar{x}\) is the closed, but not necessarily convex, cone \(\partial^{\infty} \phi(\bar{x})\) of \(v \in \R^n\) for which there exist \(\{x_k\} \to \bar{x}\), \(\{v_k\}\) and \(\{\lambda_k\} \downarrow 0\) such that \(\{\phi(x_k)\} \to \phi(\bar{x})\), \(v_k \in \widehat{\partial} \phi(x_k)\) for all \(k\), and \(\{\lambda_k v_k\} \to v\).

If \(C \subseteq \R^n\) and \(\bar{x} \in C\), the closed convex cone \(\widehat{N}_C(\bar{x}) := \widehat{\partial} \chi(\bar{x} \mid C)\) is the regular normal cone to \(C\) at \(\bar{x}\).
The closed cone \(N_C(\bar{x}) := \partial \chi(\bar{x} \mid C) = \partial^{\infty} \chi(\bar{x} \mid C)\) is the normal cone to \(C\) at \(\bar{x}\).
\(\widehat{N}_C(\bar{x}) \subseteq N_C(\bar{x})\) always holds, and is an equality if \(C\) is convex.

\(\phi\) is \emph{proper} if \(\phi(x) > - \infty\) for all \(x\), and \(\phi(x) < \infty\) for at least one \(x\).
\(\phi\) is \emph{lower semicontinuous} (\emph{lsc}) at \(\bar{x}\) if \(\liminf_{x \rightarrow \bar{x}} \phi(x) = \phi(\bar{x})\).

Let \(\phi: \R^n \to \widebar{\R}\) be proper lsc, and \(C \subseteq \R^n\) be closed.
We say that the \emph{constraint qualification} is satisfied at \(\bar{x} \in C\) for the constrained problem
\begin{equation}%
  \label{eq:min-phi-constrained}
  \minimize{x \in \R^n} \ \phi(x) \quad \st \ x \in C
\end{equation}
if
\begin{equation}%
  \label{eq:cq}
  \partial^{\infty} \phi(\bar{x}) \cap (-N_C(\bar{x})) = \{0\}.
\end{equation}

If \(\bar{x}\) solves~\eqref{eq:min-phi-constrained} and~\eqref{eq:cq} is satisfied at \(\bar{x}\), \citep[Theorem~\(8.15\) and Corollary~\(10.9\)]{rockafellar-wets-1998} yield
\[
  0 \in \partial (\phi + \chi(\cdot \mid C))(\bar{x}) \subseteq \partial \phi(\bar{x}) + N_C(\bar{x}).
\]
In the case of~\eqref{eq:nlp}, this first-order necessary condition for optimality reads
\[
  0 \in \nabla f(\bar{x}) + \partial h(\bar{x}) + N_{[\ell, u]}(\bar{x})
\]
thanks to \citep[Exercise~\(8.8c\)]{rockafellar-wets-1998}.

If \(\phi_k\) and \(\phi : \R^n \to \widebar{\R}\) for \(k \in \N\), we say that \(\{\phi_k\}\) converges to \(\phi\) \emph{continuously} if \(\{\phi_k(x_k)\} \to \phi(x)\) for all sequences \(\{x_k\} \to x\) in \(\R^n\).

The \emph{epigraph} of \(\phi\) is the set \(\epi \phi \coloneq \{ (t, x) \mid t \geq \phi(x) \} \subseteq \R \times \R^n\).
The set \(\epi \phi\) is closed if and only if \(\phi\) is lsc.

For a sequence of sets \(\{A_k\}\) with \(A_k \subseteq \R^n\) for all \(k \in \N\), the set \(\limsup_{k \in \N} A_k\) is the set of limits of all possible subsequences \(\{x_k\}_N\) with \(N \subseteq \N\) infinite and \(x_k \in A_k\) for all \(k \in N\).
The set \(\liminf_{k \in \N} A_k\) is the set of limits of sequences \(\{x_k\}_{k \in \N}\) such that \(x_k \in A_k\) for all \(k\) sufficiently large.
In particular, those concepts can be applied to the sets \(\epi \phi_k\) where \(\phi_k: \R^n \to \R\) for \(k \in \N\).
The sets \(\liminf_k \epi \phi_k\) and \(\limsup_k \epi \phi_k\) enjoy the properties of epigraphs, i.e., if \((t, x)\) lies in one of them, so does \((s, x)\) for all \(s \geq t\).
In addition, both are closed, and therefore, can be viewed as the epigraphs of certain lsc functions.
The lower and upper epi-limits of \(\{\phi_k\}\) are the functions \(\eliminf_k \phi_k\) and \(\elimsup_k \phi_k\) that satisfy \(\epi \eliminf_k \phi_k = \limsup_k \epi \phi_k\) and \(\epi \elimsup_k \phi_k = \liminf_k \epi \phi_k\).
In general, \(\eliminf_k \phi_k \leq \elimsup_k \phi_k\).
When they coincide, we say that \(\{\phi_k\}\) converges epigraphically to the common value \(\phi\), and write \(\{\phi_k\} \eto \phi\) or \(\elim_k \phi_k = \phi\).

The \emph{proximal operator} associated with a proper lsc function \(\phi\) is
\begin{equation}%
  \label{eq:def-prox}
  \prox_{\nu \phi}(q) := \argmin{x} \ \tfrac{1}{2} \nu^{-1} \|x - q\|_2^2 + \phi(x),
\end{equation}
where \(\nu > 0\) is a preset steplength.
Below, we assume that all proximal operators can be evaluated analytically.
That is not a restrictive assumption in many cases of interest for applications---see \citep{beck-2017} for a large, but not exhaustive, list of choices of \(\phi\) for which the set~\eqref{eq:def-prox} is known.

We say that \(\phi\) is prox-bounded if it is bounded below by a quadratic.
If \(\phi\) is prox-bounded and \(\nu > 0\) is sufficiently small, \(\prox_{\nu \phi}(q)\) is a nonempty and closed set.
It may contain multiple elements.

The proximal gradient method \citep{lions-mercier-1979,fukushima-mine-1981} for~\eqref{eq:nlp} is a generalization of the gradient method that takes the nonsmooth term into account.
It generates iterates \(\{s_j\}\) according to
\begin{equation}%
  \label{eq:pg-iter-basic}
  s_{j+1} \in \prox_{\nu h}(s_j - \nu \nabla f(s_j)).
\end{equation}

\subsection{Models and trust-region algorithm}

At \(x \in \R^n\) where \(h\) is finite, we define models
\begin{subequations}%
  \label{eq:def-models}
  \begin{align}
    \varphi(s; x) & \phantom{:}\approx f(x + s)
    \label{eq:def-varphi}%
    \\
    \psi(s; x)    & \phantom{:}\approx h(x + s)
    \label{eq:def-psi}%
    \\
    m(s; x)       & := \varphi(s; x) + \psi(s; x).
    \label{eq:def-m}
  \end{align}
\end{subequations}
Our assumptions on~\eqref{eq:def-models} are a minor variation of those of \citet{aravkin-baraldi-orban-2022}:

\begin{modelassumption}%
  \label{asm:models}
  For any \(x \in \R^n\), \(\varphi(\cdot; x) \in \mathcal{C}^1\), and satisfies \(\varphi(0; x) = f(x)\) and \(\nabla \varphi(0; x) = \nabla f(x)\).
  For any \(x \in \R^n\) where \(h\) is finite, \(\psi(\cdot; x)\) is proper lsc, and satisfies \(\psi(0; x) = h(x)\) and \(\partial \psi(0; x) \subseteq \partial h(x)\).
\end{modelassumption}

The difference between \Cref{asm:models} and \citep[Model Assumption~\(3.1\)]{aravkin-baraldi-orban-2022} is the last inclusion instead of an equality between the subdifferentials.

The following result states that if \(s = 0\) minimizes~\eqref{eq:def-m} and~\eqref{eq:cq} is satisfied, \(x\) must be stationary.

\begin{proposition}[{\protect \citealp[Proposition~\(1\)]{leconte-orban-2024}}]%
  \label{prop:iprox-stationarity}
  Let \(C \subset \R^n\) be nonempty and compact, and let \Cref{asm:models} be satisfied.
  Let~\eqref{eq:nlp} satisfy the constraint qualification~\eqref{eq:cq} at \(x \in C\).
  Assume \(0 \in \argmin{s} m(s; x) + \chi(x + s \mid C)\), and let the latter subproblem satisfy the constraint qualification~\eqref{eq:cq} at \(s = 0\).
  Then \(x\) is first-order stationary for~\eqref{eq:nlp}.
\end{proposition}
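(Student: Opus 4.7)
The plan is to translate the stationarity of $s = 0$ in the subproblem into stationarity of $x$ for~\eqref{eq:nlp} by invoking the calculus of limiting subdifferentials under the stated constraint qualifications, exactly as in the proof of \citep[Theorem~8.15]{rockafellar-wets-1998}. The Model Assumption is what lets us replace subdifferentials of $\varphi$ and $\psi$ at $s=0$ with those of $f$ and $h$ at $x$.

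First, since $0$ minimizes $s \mapsto m(s;x) + \chi(x+s \mid C)$ and the subproblem CQ holds at $s=0$, \citep[Theorem~8.15]{rockafellar-wets-1998} applied to $\phi = m(\cdot;x)$ with constraint set $C - x$ (which is closed because $C$ is) yields
\[
  0 \in \partial m(0;x) + N_{C-x}(0) = \partial m(0;x) + N_C(x),
\]
using that the normal cone is invariant under translation. Second, decompose $m = \varphi + \psi$: because $\varphi(\cdot;x) \in \mathcal{C}^1$, its horizon subdifferential at $0$ is $\{0\}$, so the CQ needed to apply the sum rule \citep[Exercise~10.10 / Corollary~10.9]{rockafellar-wets-1998} to $\varphi + \psi$ at $0$ reduces to $\partial^{\infty} \psi(0;x) \cap \{0\} = \{0\}$, which is automatic. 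Hence
\[
  \partial m(0;x) \subseteq \partial \varphi(0;x) + \partial \psi(0;x) = \{\nabla f(x)\} + \partial h(x),
\]
where the equalities come from \Cref{asm:models}: $\nabla \varphi(0;x) = \nabla f(x)$ and $\partial \psi(0;x) = \partial h(x)$.

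Combining the two inclusions gives $0 \in \nabla f(x) + \partial h(x) + N_C(x)$. Since the CQ~\eqref{eq:cq} for~\eqref{eq:nlp} is assumed at $x$, and the feasible set $[\ell,u]$ enters through $C$ in the algorithmic setting of \citep{leconte-orban-2023} (typically via $C \subseteq [\ell,u]$ with $x$ in the trust-region interior so $N_{[\ell,u]}(x) \subseteq N_C(x)$), one concludes
\[
  0 \in \nabla f(x) + \partial h(x) + N_{[\ell,u]}(x),
\]
which, by \citep[Exercise~8.8c]{rockafellar-wets-1998}, is precisely first-order stationarity of $x$ for~\eqref{eq:nlp}.

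The main obstacle I anticipate is bookkeeping: ensuring the two constraint qualifications (for the subproblem and for~\eqref{eq:nlp}) together justify both the sum rule separating $\partial m(0;x) + N_C(x)$ and the sum rule separating $\partial \varphi(0;x) + \partial \psi(0;x)$, and then cleanly passing from $N_C(x)$ back to $N_{[\ell,u]}(x)$ so that the conclusion is stated in terms of the original feasible set rather than the algorithmic set $C$. The smoothness of $\varphi$ trivializes the second sum rule, and translation invariance handles the normal-cone shift, so the delicate step is really the identification of normal cones.
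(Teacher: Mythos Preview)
The paper does not prove this proposition; it is quoted from \citep{leconte-orban-2023} without argument. Your route through \citep[Theorem~8.15]{rockafellar-wets-1998} and the smooth-plus-nonsmooth sum rule is the standard one and is almost certainly what the cited proof does.

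There is, however, a genuine gap in your final step. From \(0 \in \nabla f(x) + \partial h(x) + N_C(x)\) together with the inclusion \(N_{[\ell,u]}(x) \subseteq N_C(x)\) (which is indeed the correct direction when \(C \subseteq [\ell,u]\)), you \emph{cannot} deduce \(0 \in \nabla f(x) + \partial h(x) + N_{[\ell,u]}(x)\): the inclusion points the wrong way for that implication. What you actually need is \(N_C(x) \subseteq N_{[\ell,u]}(x)\), in fact equality. In the algorithmic instance \(C = [\ell,u] \cap (x + \Delta\B)\) the equality does hold, because \(x\) lies in the interior of the ball \(x + \Delta\B\), so \(N_{x+\Delta\B}(x) = \{0\}\) and the normal-cone intersection rule for convex sets yields \(N_C(x) = N_{[\ell,u]}(x) + \{0\} = N_{[\ell,u]}(x)\). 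You correctly flagged this as the delicate step, but the inclusion you wrote does not close it; asserting the equality and justifying it via the interior condition does. Note too that as literally stated---with \(C\) an arbitrary nonempty compact set unrelated to \([\ell,u]\)---the conclusion can fail (take \(C = \{x\}\), for which \(s=0\) is trivially optimal yet \(x\) need not be stationary for~\eqref{eq:nlp}); the proposition is implicitly relying on the specific algorithmic structure of \(C\).
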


Assuming \(\partial \psi(0; x) = \partial h(x)\) in \Cref{asm:models} would allow us to establish the reverse implication in \Cref{prop:iprox-stationarity}.

Each iteration is divided into two parts.
In the first part, \citet{aravkin-baraldi-leconte-orban-2023} define the following model based on a first-order Taylor expansion to compute a \emph{Cauchy point}
\begin{subequations}%
  \label{eq:def-model-nu}
  \begin{align}
    \varphi_{\mathrm{cp}}(s; x) & := f(x) + \nabla f(x)^T s,
    \label{eq:def-phi-nu}%
    \\
    m_{\mathrm{cp}}(s; x, \nu)                & := \varphi_{\mathrm{cp}}(s; x) + \tfrac{1}{2}\nu^{-1}\|s\|^2 + \psi(s; x),
    \label{eq:def-m-nu}
  \end{align}
\end{subequations}
where \(\nu_k > 0\) and ``cp'' stands for ``Cauchy point.''
We compute a first step
\begin{equation}%
  \label{eq:ipg-iter-sk1}
  s_{k,1} \in \argmin{s} \ m_{\mathrm{cp}}(s; x_k, \nu_k) + \chi(x_k + s \mid [\ell, \, u] \cap (x_k + \Delta_k \B)),
\end{equation}
for an appropriate value of \(\nu_k > 0\).

In the notation of \citep{aravkin-baraldi-leconte-orban-2023}, let
\begin{equation}
  \label{eq:def-xi1}
  \xi_{\mathrm{cp}}(\Delta_k; x_k, \nu_k) := f(x_k) + h(x_k) - \varphi_{\mathrm{cp}}(s_{k,1}; x_k) - \psi(s_{k,1}; x_k),
\end{equation}
denote the optimal model decrease for~\eqref{eq:def-model-nu}.
By~\eqref{eq:ipg-iter-sk1},
\begin{equation*}
  m_{\mathrm{cp}}(s_{k,1}; x_k, \nu_k) = \varphi_{\mathrm{cp}}(s_{k,1}; x_k) + \psi(s_{k,1}; x_k) + \tfrac{1}{2} \nu_k^{-1} \|s_{k,1}\|^2 \le m_{\mathrm{cp}}(0; x_k, \nu_k) = f(x_k) + h(x_k),
\end{equation*}
so that, with~\eqref{eq:def-xi1},
\begin{equation}%
  \label{eq:xi1-suff-decrease}
  \xi_{\mathrm{cp}}(\Delta_k; x_k, \nu_k) \ge \tfrac{1}{2}\nu_k^{-1} \|s_{k,1}\|^2.
\end{equation}

The following proposition indicates that \(\xi_{\mathrm{cp}}(\Delta; x, \nu)\) can be used to determine whether \(x\) is first-order stationary for~\eqref{eq:nlp}.

\begin{proposition}[{\protect \citealp[Proposition~\(3.3\)]{aravkin-baraldi-orban-2022}} and \citealp{aravkin-baraldi-leconte-orban-2023}]%
  \label{prop:xicp-stat-0}
  Let \Cref{asm:models} be satisfied, \(\Delta > 0\), and \(\nu > 0\).
  In addition, let~\eqref{eq:nlp} satisfy the constraint qualification at \(x\) and the objective of~\eqref{eq:ipg-iter-sk1} satisfy the constraint qualification at \(s = 0\).
  Then, \(\xi_{\mathrm{cp}}(\Delta; x, \nu) = 0\) \(\iff\) \(s = 0\) is a solution of~\eqref{eq:ipg-iter-sk1} \(\implies\) \(x\) is first-order stationary for~\eqref{eq:nlp}.
\end{proposition}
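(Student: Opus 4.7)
The plan is to express $\xi_{\mathrm{cp}}$ as a sum of two nonnegative quantities, from which the equivalence follows by inspection, and then reduce the implication to stationarity to \Cref{prop:iprox-stationarity}. First, I would use \Cref{asm:models}, which gives $\psi(0; x) = h(x)$, together with $\varphi_{\mathrm{cp}}(0; x) = f(x)$ to rewrite
\[
  \xi_{\mathrm{cp}}(\Delta; x, \nu) = m(0; x, \nu) - \bigl[m(s_{k,1}; x, \nu) - \tfrac{1}{2}\nu^{-1}\|s_{k,1}\|^2\bigr] = \bigl[m(0; x, \nu) - m(s_{k,1}; x, \nu)\bigr] + \tfrac{1}{2}\nu^{-1}\|s_{k,1}\|^2.
\]
Since $s = 0$ is feasible for the subproblem~\eqref{eq:ipg-iter-sk1} (as $x \in [\ell, u]$ and $0 \in \Delta \B$) and $s_{k,1}$ is a minimizer, the bracketed term is nonnegative, so $\xi_{\mathrm{cp}} \geq \tfrac{1}{2}\nu^{-1}\|s_{k,1}\|^2 \geq 0$.

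From this decomposition the equivalence is immediate. If $\xi_{\mathrm{cp}}(\Delta; x, \nu) = 0$, then both summands vanish; in particular $\|s_{k,1}\| = 0$, so $s = 0$ achieves the minimum and is a solution of~\eqref{eq:ipg-iter-sk1}. Conversely, if $0$ solves~\eqref{eq:ipg-iter-sk1}, then we may take $s_{k,1} = 0$ (or equivalently use that the optimal value is unique), and plugging into the definition~\eqref{eq:def-xi1} gives $\xi_{\mathrm{cp}}(\Delta; x, \nu) = 0$.

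For the final implication, I would apply \Cref{prop:iprox-stationarity} with the compact set $C := [\ell, u] \cap (x + \Delta \B)$ and the model $m(\cdot; x, \nu)$. The smooth part $\varphi_{\mathrm{cp}}(s; x) + \tfrac{1}{2}\nu^{-1}\|s\|^2$ is $\mathcal{C}^1$, evaluates to $f(x)$ at $s = 0$, and has gradient $\nabla f(x)$ there, so \Cref{asm:models} is satisfied for the effective smooth/nonsmooth splitting of $m(\cdot; x, \nu)$. The constraint qualification for~\eqref{eq:nlp} at $x$ and for the subproblem at $0$ are assumed; thus \Cref{prop:iprox-stationarity} yields that $x$ is first-order stationary for~\eqref{eq:nlp}.

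The only subtle point I anticipate is checking that \Cref{prop:iprox-stationarity} may indeed be invoked, because the Cauchy model carries the extra quadratic term $\tfrac{1}{2}\nu^{-1}\|s\|^2$ that does not appear in the generic model~\eqref{eq:def-models}. This is a purely notational hurdle: the quadratic is smooth and vanishes together with its gradient at $s = 0$, so absorbing it into $\varphi$ preserves \Cref{asm:models}. Once this identification is made, the proof is essentially a one-line application of the earlier proposition.
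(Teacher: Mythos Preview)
The paper does not supply its own proof of this proposition; it is quoted verbatim from \citep[Proposition~3.3]{aravkin-baraldi-orban-2022} and \citep{aravkin-baraldi-leconte-orban-2023} and left unproved here. Your argument is correct and is precisely the route one would expect: the identity
\[
  \xi_{\mathrm{cp}}(\Delta; x, \nu) = \bigl[m(0; x, \nu) - m(s_{k,1}; x, \nu)\bigr] + \tfrac{1}{2}\nu^{-1}\|s_{k,1}\|^2
\]
(which the paper itself uses later, cf.~\eqref{eq:suff-decrease}) immediately gives the equivalence, and the implication to stationarity is exactly \Cref{prop:iprox-stationarity} once the harmless quadratic term is absorbed into the smooth part of the model, as you note. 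One small point worth making explicit in the ``$\Leftarrow$'' direction: because $\xi_{\mathrm{cp}}$ is defined through a \emph{chosen} minimizer $s_{k,1}$ rather than through the optimal value of~\eqref{eq:ipg-iter-sk1}, you should argue that if $0$ is a minimizer then \emph{every} minimizer $s_{k,1}$ must have $\|s_{k,1}\| = 0$; this follows from your decomposition since $m(0;x,\nu)=m(s_{k,1};x,\nu)$ forces the bracket to vanish, whence $\xi_{\mathrm{cp}} = \tfrac{1}{2}\nu^{-1}\|s_{k,1}\|^2$, and nonnegativity of $\xi_{\mathrm{cp}}$ together with the forward direction closes the loop. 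Your parenthetical ``the optimal value is unique'' is the right observation but stops one step short of this.
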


In the second part of iteration \(k\), we construct a model based on the second-order Taylor expansion
\begin{subequations}%
  \label{eq:def-model-quad}
  \begin{align}
    \varphi(s; x, B) & := f(x) + \nabla f(x)^T s + \tfrac{1}{2} s^T B s,
    \label{eq:def-phi-quad}%
    \\
    m(s; x, B)       & := \varphi(s; x, B) + \psi(s; x),
  \end{align}
\end{subequations}
where \(B = B^T \in \R^{n \times n}\), and compute a step as an approximate solution of
\begin{equation}
  \label{eq:model-k}
  \minimize{s} \ m(s; x_k, B_k) + \chi(x_k + s \mid [\ell, \, u] \cap (x_k + \Delta_k \B)),
\end{equation}
using \(s_{k,1}\) as starting point.

We focus on the trust-region (TR) algorithm formally stated as \Cref{alg:tr-nonsmooth}.
It consists of the algorithm of \citet{aravkin-baraldi-orban-2022} with a modified maximum allowable stepsize \(\nu_k\).
The concept of inexact solution of~\eqref{eq:model-k} at \Cref{step:sk-computation} is made precise in \Cref{prop:justif-asm-model-diff} below.

Note that \Cref{alg:tr-nonsmooth} differs from a ``standard'' trust-region algorithm in that the parameter \(\Delta_k\) that is updated according to whether or not a step \(s_k\) is accepted serves to define the trust-region radius, but is not the radius in itself; \(s_{k,1}\) is used to check for stationarity, and to set the trust-region radius for the computation of the step \(s_k\).
\citet{aravkin-baraldi-orban-2022} provide more details on this point and link to variants of standard trust-region algorithms for smooth optimization possessing similar features.

\begin{algorithm}[ht]%
  \caption[caption]{%
    Nonsmooth trust-region algorithm with potentially unbounded Hessian.%
    \label{alg:tr-nonsmooth}
  }
  \begin{algorithmic}[1]%
    \State Choose constants
    \[
      0 < \eta_1 \leq \eta_2 < 1,
      \quad
      0 < 1/\gamma_3 \leq \gamma_1 \leq \gamma_2 < 1 < \gamma_3 \leq \gamma_4, \quad
      \Delta_{\max} > \Delta_0, \quad
      \alpha > 0,
      \quad \text{and} \quad
      \beta \geq 1.
    \]
    \State Choose a stopping tolerance \(\epsilon > 0\).
    \State Choose \(x_0 \in \R^n\) where \(h\) is finite, \(\Delta_0 > 0\), compute \(f(x_0) + h(x_0)\).
    \For{\(k = 0, 1, \ldots\)}
    \State%
    \label{step:nuk-interval}%
    Choose
    \begin{equation}
      \label{eq:nu-ineq}
      0 < \nu_k \le \frac{\alpha \Delta_k}{1 + \|B_k\|(1 + \alpha \Delta_k)} = \frac{1}{\alpha^{-1}\Delta_k^{-1} + \|B_k\|(1 + \alpha^{-1} \Delta_k^{-1})}.
    \end{equation}
    \State%
    \label{step:sk1-computation}%
    Define \(m_{\mathrm{cp}}(s; x_k, \nu_k)\) as in~\eqref{eq:def-model-nu} and compute \(s_{k,1}\) as in~\eqref{eq:ipg-iter-sk1}.
    \State%
    \label{step:stop-crit}
    If \(\nu_k^{-1/2} \xi_{\mathrm{cp}}(\Delta_k; x_k, \nu_k)^{1/2} \leq \epsilon\), terminate and claim that \(x_k\) is approximately stationary.
    \State%
    \label{step:sk-computation}%
    Define \(m(s; x_k, B_k)\) as in~\eqref{eq:def-model-quad} according to \Cref{asm:models} and compute an approximate solution \(s_k\) of~\eqref{eq:model-k} with \(\Delta_k\) replaced by \(\min (\Delta_k, \, \beta \|s_{k,1}\|)\).
    \State%
    \label{step:rhok-computation}%
    Compute the ratio
    \begin{equation}%
      \label{eq:rhok}
      \rho_k :=
      \frac{%
        f(x_k) + h(x_k) - (f(x_k + s_k) + h(x_k + s_k))
      }{
        m(0; x_k, B_k) - m(s_k; x_k, B_k)
      }.
    \end{equation}
    \State%
    \label{setp:xk-update}%
    If \(\rho_k \geq \eta_1\), set \(x_{k+1} = x_k + s_k\).
    Otherwise, set \(x_{k+1} = x_k\).
    \State%
    \label{step:deltak-update}%
    Update the trust-region radius according to
    \[
      \bar \Delta_{k+1} \in
      \left\{
      \begin{array}{lll}%
        {[\gamma_3 \Delta_k, \, \gamma_4 \Delta_k]} &
        \text{ if } \rho_k \geq \eta_2,             &
        \text{(very successful iteration)}
        \\
        {[\gamma_2 \Delta_k, \, \Delta_k]}          &
        \text{ if } \eta_1 \leq \rho_k < \eta_2,    &
        \text{(successful iteration)}
        \\
        {[\gamma_1 \Delta_k, \, \gamma_2 \Delta_k]} &
        \text{ if } \rho_k < \eta_1,                &
        \text{(unsuccessful iteration)}
      \end{array}
      \right.
    \]
    and \(\Delta_{k+1} = \min(\bar \Delta_{k+1}, \, \Delta_{\max})\)
    \EndFor
  \end{algorithmic}
\end{algorithm}

Let us now briefly turn our attention to unconstrained smooth problems.
In this case, the following lemma gives a global minimizer of~\eqref{eq:def-model-nu} and~\eqref{eq:def-model-quad}.

\begin{lemma}%
  \label{lem:sol-pb-B-gen}
  We consider the special case of~\eqref{eq:nlp} where \(h = 0\), \(\ell_i = -\infty\) and \(u_i = +\infty\) for \(i = 1, \ldots , n\).
  Let \(B = B^T \in \R^{n \times n}\) be positive definite and \(\psi = 0\).
  Then for any \(x \in \R^n\),
  \begin{equation}
    \label{eq:pb-B-gen}
    \argmin{s} m(s; x, B) = \argmin{s} \varphi(s; x, B) = \{-B^{-1} \nabla f(x)\}.
  \end{equation}
  In particular, if \(B = \nu^{-1} I\) with \(\nu > 0\),
  \begin{equation}
    \label{eq:pb-nu-cp-gen}
    \argmin{s} m_{\mathrm{cp}}(s; x, \nu) = \argmin{s} \varphi_{\mathrm{cp}}(s; x) + \tfrac{1}{2} \nu^{-1} \|s\|^2 = \{s_{k,1}\} = \{-\nu \nabla f(x)\}.
  \end{equation}
\end{lemma}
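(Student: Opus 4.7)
The plan is to observe that under the hypotheses, both subproblems reduce to unconstrained strictly convex quadratic minimizations whose unique solutions can be read off from first-order optimality conditions.

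First, I would note that because $\ell_i = -\infty$ and $u_i = +\infty$ for all $i$, the feasible set is $\R^n$, so the indicator terms $\chi(x + s \mid [\ell,u])$ disappear from~\eqref{eq:ipg-iter-sk1} (aside from the trust-region ball, which is irrelevant here since we are minimizing over all of $\R^n$ in the statement). Combined with $h = 0$ and $\psi \equiv 0$, the models simplify to $m(s; x, B) = \varphi(s; x, B)$ and $m(s; x, \nu) = \varphi_{\mathrm{cp}}(s; x) + \tfrac{1}{2}\nu^{-1}\|s\|^2$.

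Next, for the quadratic model $\varphi(s;x,B) = f(x) + \nabla f(x)^T s + \tfrac{1}{2} s^T B s$, positive definiteness of $B$ makes $\varphi(\cdot;x,B)$ strictly convex and coercive, so it has a unique global minimizer characterized by $\nabla_s \varphi(s;x,B) = \nabla f(x) + B s = 0$, giving $s = -B^{-1}\nabla f(x)$. This establishes~\eqref{eq:pb-B-gen}.

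Finally, for the second identity, I would specialize to $B = \nu^{-1} I$, which is positive definite since $\nu > 0$. Observing that
\[
  \varphi_{\mathrm{cp}}(s; x) + \tfrac{1}{2}\nu^{-1}\|s\|^2 = f(x) + \nabla f(x)^T s + \tfrac{1}{2} s^T (\nu^{-1} I) s = \varphi(s; x, \nu^{-1} I),
\]
equation~\eqref{eq:pb-nu-cp-gen} follows immediately from~\eqref{eq:pb-B-gen} by substituting $B = \nu^{-1} I$ and computing $-(\nu^{-1} I)^{-1} \nabla f(x) = -\nu \nabla f(x)$. There is no genuine obstacle in this proof; it is a direct consequence of standard unconstrained convex quadratic minimization, included for later reference.
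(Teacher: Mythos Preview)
Your proof is correct and follows essentially the same approach as the paper: reduce to an unconstrained strictly convex quadratic, apply the first-order optimality condition $\nabla f(x) + Bs = 0$, and then specialize to $B = \nu^{-1} I$. The only difference is that you spell out strict convexity and coercivity to justify uniqueness, whereas the paper simply invokes convexity and the first-order condition.
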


\begin{proof}
  The objective of~\eqref{eq:pb-B-gen} is convex because \(B\) is positive definite.
  Its global minimizer satisfies the first-order necessary condition \(\nabla f(x) + B s = 0\), i.e., \(s = -B^{-1} \nabla f(x)\).
  With \(B = \nu^{-1} I\), the first-order necessary condition is \(s = -\nu \nabla f(x)\).
\end{proof}

The following proposition draws a parallel between \(\xi_{\mathrm{cp}}(\Delta_k; x_k, \nu_k)\) and \(\|\nabla f(x_k)\|\) for smooth problems when the trust-region constraint is inactive, as is expected to occur when close to a stationary point.

\begin{proposition}%
  \label{prop:smooth-stat}
  We consider the special case of~\eqref{eq:nlp} where \(h = 0\), \(\ell_i = -\infty\) and \(u_i = +\infty\) for \(i = 1, \ldots , n\).
  If \(\|s_{k,1}\| < \Delta_k\), then \(\xi_{\mathrm{cp}}(\Delta_k; x_k, \nu_k) = \nu_k \|\nabla f(x_k)\|^2\).
\end{proposition}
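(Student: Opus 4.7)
The plan is to reduce the Cauchy-step subproblem~\eqref{eq:ipg-iter-sk1} to an unconstrained quadratic minimization under the hypotheses of the proposition and then evaluate the model decrease~\eqref{eq:def-xi1} explicitly. First I would specialize the problem data: since $h \equiv 0$, \Cref{asm:models} is compatible with taking $\psi(\cdot; x_k) \equiv 0$ (as is done in \Cref{lem:sol-pb-B-gen}), and since $\ell_i = -\infty$, $u_i = +\infty$, the constraint set $[\ell, u] \cap (x_k + \Delta_k \B)$ appearing in~\eqref{eq:ipg-iter-sk1} collapses to the ball $x_k + \Delta_k \B$. Thus the Cauchy subproblem becomes
\[
  s_{k,1} \in \argmin_s \ f(x_k) + \nabla f(x_k)^T s + \tfrac{1}{2} \nu_k^{-1} \|s\|^2 + \chi(s \mid \Delta_k \B).
\]

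Next, I would use the hypothesis $\|s_{k,1}\| < \Delta_k$ to argue that the trust-region indicator is inactive at $s_{k,1}$. Because the remaining objective is strictly convex (Hessian $\nu_k^{-1} I \succ 0$), $s_{k,1}$ must coincide with the unique unconstrained global minimizer. Applying~\eqref{eq:pb-nu-cp-gen} of \Cref{lem:sol-pb-B-gen} with $B = \nu_k^{-1} I$ gives $s_{k,1} = -\nu_k \nabla f(x_k)$.

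Finally, I would plug this explicit formula into~\eqref{eq:def-xi1}. Since $h(x_k) = 0 = \psi(s_{k,1}; x_k)$, the model decrease collapses to $\xi_{\mathrm{cp}}(\Delta_k; x_k, \nu_k) = f(x_k) - \varphi_{\mathrm{cp}}(s_{k,1}; x_k) = -\nabla f(x_k)^T s_{k,1} = \nu_k \|\nabla f(x_k)\|^2$, which is exactly the claim. There is no real obstacle here; the result is essentially a direct computation, the only subtle point being the justification that strict interiority of $s_{k,1}$ in the trust region forces it to be the unconstrained minimizer, which is immediate from strict convexity of the quadratic plus quadratic regularization.
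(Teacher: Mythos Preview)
Your proposal is correct and follows essentially the same route as the paper: invoke \Cref{lem:sol-pb-B-gen} to identify $s_{k,1} = -\nu_k \nabla f(x_k)$ once the trust-region constraint is inactive, then substitute into~\eqref{eq:def-xi1}. Your additional remark that strict convexity forces the interior solution to coincide with the unconstrained minimizer is a slight elaboration the paper leaves implicit, but the argument is otherwise identical.
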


\begin{proof}
  If the trust-region constraint is inactive, \Cref{lem:sol-pb-B-gen} indicates that \(s_{k,1} = -\nu_k \nabla f(x_k)\).
  Thus,~\eqref{eq:def-xi1} yields \(\xi_{\mathrm{cp}}(\Delta_k; x_k, \nu_k) = -\nabla f(x_k)^T s_{k,1} = \nu_k\|\nabla f(x_k)\|^2\).
\end{proof}

\section{Convergence and complexity with potentially unbounded Hessians}%
\label{sec:ubnd-qn}

From this section onwards, we consider the model defined in~\eqref{eq:def-model-quad}, and we aim to establish convergence and worst-case complexity results for \Cref{alg:tr-nonsmooth} in the presence of potentially unbounded Hessian approximations \(B_k\).

The following two assumptions are essential.
\Cref{asm:model-diff} is \citep[Step Assumption~\(3.8b\)]{aravkin-baraldi-orban-2022}, whereas \Cref{asm:model-ubd} is a relaxed version of \citep[Step Assumption~\(3.8a\)]{aravkin-baraldi-orban-2022} that takes into account potentially unbounded Hessian approximations.
Indeed, assuming, for simplicity, that \(\nabla^2 f(x_k)\) exists, a second-order Taylor expansion of \(f\) about \(x_k\) yields
\begin{equation*}
  f(x_k + s_k) - \varphi(s_k; x_k, B_k) = \tfrac{1}{2}s_k^T (\nabla^2 f(x_k) - B_k) s_k + o(\|s_k\|^2),
\end{equation*}
which is not necessarily \(O(\|s_k\|^2)\) if \(\{B_k\}\) is unbounded.

\begin{assumption}%
  \label{asm:model-diff}
  There exists \(\kappa_{\mathrm{mdc}} \in (0, 1)\) such that
  \begin{equation}
    m(0; x_k, B_k) - m(s_k; x_k, B_k) \ge \kappa_{\mathrm{mdc}} \xi_{\mathrm{cp}}(\Delta_k; x_k, \nu_k).
  \end{equation}
\end{assumption}

\begin{assumption}%
  \label{asm:model-ubd}
  There exists \(\kappa_{\mathrm{ubd}} > 0\) such that
  \begin{equation}
    |(f + h)(x_k + s_k) - m(s_k; x_k, B_k)| \le \kappa_{\mathrm{ubd}} (1 + \|B_k\|)\|s_k\|_2^2.
  \end{equation}
\end{assumption}


\citet[Proposition~\(2\)]{leconte-orban-2024} and \citet{aravkin-baraldi-leconte-orban-2023} already indicate that \Cref{asm:model-diff} holds for TRDH and TR\@.
We now justify that it also holds for \Cref{alg:tr-nonsmooth} with potentially unbounded Hessian approximations.

\begin{proposition}%
  \label{prop:justif-asm-model-diff}
  If \Cref{asm:models} is satisfied, and \(s_k\) is computed so that \(m(s_k; x_k, B_k) \leq m(s_{k,1}; x_k, B_k)\) at \Cref{step:sk-computation} of \Cref{alg:tr-nonsmooth}, there exists \(\kappa_{\mathrm{mdc}} \in (0, 1)\) such that \Cref{asm:model-diff} holds.
\end{proposition}

\begin{proof}
  We proceed similarly as in \citep[Proposition~\(2\)]{leconte-orban-2024}.
  Note that \(s_{k,1}\) is feasible for the problem on \Cref{step:sk-computation}.
  The definition of \(s_k\) in the assumptions implies that
  \begin{align*}
    m(s_k; x_k, B_k) & \le m(s_{k,1}; x_k, B_k) = \varphi_{\mathrm{cp}}(s_{k,1}; x_k) + \tfrac{1}{2} s_{k,1}^T B_k s_{k,1} + \psi(s_{k,1}; x_k)
    \\
                     & \le \varphi_{\mathrm{cp}}(s_{k,1}; x_k) + \tfrac{1}{2} \|B_k\| \|s_{k,1}\|^2 + \psi(s_{k,1}; x_k),
  \end{align*}
  where we used Cauchy-Schwarz and the consistency of the \(\ell_2\)-norm for matrices.
  Because \(m(0; x_k, B_k) = m_{\mathrm{cp}}(0; x_k, \nu_k)\),
  \begin{equation*}
    m(0; x_k, B_k) - m(s_k; x_k, B_k) \ge \xi_{\mathrm{cp}}(\Delta_k; x_k, \nu_k) - \tfrac{1}{2} \|B_k\| \|s_{k,1}\|^2.
  \end{equation*}
  To satisfy \Cref{asm:model-diff}, it is sufficient to show that there exists \(\kappa_{\mathrm{mdc}} \in (0, 1)\) such that
  \begin{equation*}
    \xi_{\mathrm{cp}}(\Delta_k; x_k, \nu_k) - \tfrac{1}{2} \|B_k\| \|s_{k,1}\|^2 \ge \kappa_{\mathrm{mdc}} \xi_{\mathrm{cp}}(\Delta_k; x_k, \nu_k),
  \end{equation*}
  i.e.,
  \begin{equation*}
    (1 - \kappa_{\mathrm{mdc}}) \xi_{\mathrm{cp}}(\Delta_k; x_k, \nu_k) \ge \tfrac{1}{2} \|B_k\| \|s_{k,1}\|^2.
  \end{equation*}
  Because of~\eqref{eq:xi1-suff-decrease}, it is also sufficient to show that there exists \(\kappa_{\mathrm{mdc}} \in (0, 1)\) such that
  \begin{equation}
    \label{eq:kappa-mdc-nuk-Bk-ineq}
    (1 - \kappa_{\mathrm{mdc}%
    }%
    )
    \nu_k^{-1} \ge \|B_k\|.
  \end{equation}
  If \(B_k = 0\), the conclusion holds.
  Otherwise,
  \begin{equation}
    \label{eq:Bknuk-ineq}
    \|B_k\| \nu_k \le \frac{1}{\alpha^{-1} \Delta_k^{-1} \|B_k\|^{-1} + 1 + \alpha^{-1} \Delta_k^{-1}} \le \frac{1}{\alpha^{-1} \Delta_{\max}^{-1} \|B_k\|^{-1} + 1 + \alpha^{-1} \Delta_{\max}^{-1}} \le \frac{1}{1 + \alpha^{-1} \Delta_{\max}^{-1}} \in (0, 1).
  \end{equation}
  We deduce from~\eqref{eq:Bknuk-ineq} that~\eqref{eq:kappa-mdc-nuk-Bk-ineq} holds, which is sufficient to satisfy \Cref{asm:model-diff}.
\end{proof}

Because a step \(s_k\) is typically computed using a variant of the proximal gradient method applied to \(m(s; x_k, B_k)\), \Cref{prop:justif-asm-model-diff} suggests that we first compute \(s_{k,1}\) to determine (approximate) stationarity, and continue the proximal gradient iterations from \(s_{k,1}\) if appropriate.

Because all norms are equivalent in finite dimension, the proof of \Cref{prop:justif-asm-model-diff} continues to hold if we compute \(\|B_k\|\) in a norm other than the spectral norm, or even if we obtain an approximation \(\beta_k \geq \mu \|B_k\|\) for some \(\mu \in (0, 1)\).
We may replace \(\|B_k\|\) with \(\beta_k\) in the upper bound on \(\nu_k\) in \Cref{alg:tr-nonsmooth} and repeat the proof of \Cref{prop:justif-asm-model-diff} to arrive at \(\kappa_{\mathrm{mdc}} = 1 - 1 / (\mu (1 + \alpha^{-1} \Delta_{\max}^{-1}))\).
In practice, \(B_k\) is often available as an abstract operator rather than an explicit matrix.
In such a situation, computing \(\|B_k\|_1\), \(\|B_k\|_{\infty}\) or \(\|B_k\|_F\), say, is impractical.

We begin the convergence analysis by showing that there still exists a \(\Delta_{\mathrm{succ}}\) as in \citep[Theorem~\(3.4\)]{aravkin-baraldi-orban-2022}, despite our more general \Cref{asm:model-ubd}.

\begin{theorem}
  Let \Cref{asm:models}, \Cref{asm:model-diff} and \Cref{asm:model-ubd} be satisfied and
  \[
    \Delta_{\mathrm{succ}} := \frac{\kappa_{\mathrm{mdc}}(1 - \eta_2)}{2 \kappa_{\mathrm{ubd}} \alpha \beta^2} > 0.
  \]
  If~\eqref{eq:nlp} satisfies the constraint qualification at \(x_k\),~\eqref{eq:def-model-nu} satisfies the constraint qualification at \(0\), \(x_k\) is not first-order stationary for~\eqref{eq:nlp}, and \(\Delta_k \leq \Delta_{\mathrm{succ}}\), then iteration \(k\) is very successful and \(\Delta_{k+1} \ge \Delta_k\).
\end{theorem}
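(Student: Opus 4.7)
The plan is to bound $|\rho_k - 1|$ from above by $1-\eta_2$, which immediately gives $\rho_k \geq \eta_2$ and hence a very successful iteration; the definition of $\Delta_{\mathrm{succ}}$ strongly suggests that the threshold on $\Delta_k$ emerges precisely from forcing this inequality. Since $x_k$ is not first-order stationary, \Cref{prop:xicp-stat-0} yields $\xi_{\mathrm{cp}}(\Delta_k; x_k, \nu_k) > 0$, so in particular $\|s_{k,1}\| > 0$ and the denominator of $\rho_k$ is positive via \Cref{asm:model-diff}.

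The first step is to write
\[
  |\rho_k - 1| = \frac{|(f+h)(x_k + s_k) - m_k(s_k; x_k, B_k)|}{m_k(0; x_k, B_k) - m_k(s_k; x_k, B_k)},
\]
bound the numerator using \Cref{asm:model-ubd} by $\kappa_{\mathrm{ubd}}(1 + \|B_k\|)\|s_k\|^2$, and bound the denominator below using \Cref{asm:model-diff} by $\kappa_{\mathrm{mdc}}\xi_{\mathrm{cp}}(\Delta_k; x_k, \nu_k) \geq \tfrac{1}{2}\kappa_{\mathrm{mdc}}\nu_k^{-1}\|s_{k,1}\|^2$, where the second inequality follows as in the proof of \Cref{prop:justif-asm-model-diff} from the fact that $s_{k,1}$ minimizes $m(\cdot; x_k, \nu_k)$. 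Step~8 of \Cref{alg:tr-nonsmooth} limits the trust-region radius for the computation of $s_k$ to $\min(\Delta_k, \beta\|s_{k,1}\|)$, so $\|s_k\|^2 \leq \beta^2 \|s_{k,1}\|^2$. Combining these three estimates cancels $\|s_{k,1}\|^2$ and yields
\[
  |\rho_k - 1| \leq \frac{2 \kappa_{\mathrm{ubd}} \beta^2}{\kappa_{\mathrm{mdc}}} (1 + \|B_k\|) \nu_k.
\]

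The remaining step, and the only point requiring a small manipulation, is to control $(1 + \|B_k\|) \nu_k$ in terms of $\Delta_k$. The bound on $\nu_k$ in Step~\ref{step:nuk-interval} of \Cref{alg:tr-nonsmooth} gives
\[
  (1 + \|B_k\|) \nu_k \leq \frac{\alpha \Delta_k (1 + \|B_k\|)}{1 + \|B_k\|(1 + \alpha \Delta_k)} \leq \alpha \Delta_k,
\]
since the numerator is bounded by the denominator as $\alpha \Delta_k \|B_k\| \geq 0$. Substituting gives $|\rho_k - 1| \leq 2 \kappa_{\mathrm{ubd}} \alpha \beta^2 \Delta_k / \kappa_{\mathrm{mdc}}$, and the hypothesis $\Delta_k \leq \Delta_{\mathrm{succ}}$ yields $|\rho_k - 1| \leq 1 - \eta_2$, so $\rho_k \geq \eta_2$ and the iteration is very successful. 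The update rule in Step~\ref{step:deltak-update} then gives $\Delta_{k+1} \geq \gamma_3 \Delta_k \geq \Delta_k$, provided the cap by $\Delta_{\max}$ does not shrink it; since $\Delta_k \leq \Delta_{\mathrm{succ}}$ in the regime of interest is much smaller than $\Delta_{\max}$, this is benign, but to be safe one can note that $\Delta_{k+1} = \min(\bar{\Delta}_{k+1}, \Delta_{\max}) \geq \min(\gamma_3 \Delta_k, \Delta_{\max}) \geq \Delta_k$ whenever $\Delta_k \leq \Delta_{\max}$, which holds throughout the algorithm.

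The only mild obstacle is the algebra on $(1+\|B_k\|)\nu_k$ to extract a clean $\alpha \Delta_k$ upper bound independent of $\|B_k\|$; everything else is a direct application of \Cref{asm:model-diff}, \Cref{asm:model-ubd}, and the step-length restrictions built into \Cref{alg:tr-nonsmooth}.
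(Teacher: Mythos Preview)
Your proposal is correct and follows essentially the same approach as the paper: bound $|\rho_k-1|$ via \Cref{asm:model-ubd} in the numerator, \Cref{asm:model-diff} together with $\xi_{\mathrm{cp}}\geq \tfrac{1}{2}\nu_k^{-1}\|s_{k,1}\|^2$ in the denominator, and $\|s_k\|\leq\beta\|s_{k,1}\|$ from Step~\ref{step:sk-computation}. The only cosmetic difference is that the paper substitutes the lower bound $\nu_k^{-1}\geq \alpha^{-1}\Delta_k^{-1}(1+\|B_k\|)$ directly into the denominator so that $(1+\|B_k\|)$ cancels immediately, whereas you keep $\nu_k$ and bound $(1+\|B_k\|)\nu_k\leq\alpha\Delta_k$ at the end; the algebra is identical.
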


\begin{proof}
  By~\eqref{eq:xi1-suff-decrease} and~\eqref{eq:nu-ineq},
  \begin{equation}%
    \label{eq:suff-decrease}
    \xi_{\mathrm{cp}}(\Delta_k; x_k, \nu_k) \geq \tfrac{1}{2} \nu_k^{-1} \|s_{k,1}\|^2 \geq \tfrac{1}{2} (\alpha^{-1}\Delta_k^{-1} + \|B_k\|(1 + \alpha^{-1}\Delta_k^{-1})) \|s_{k,1}\|^2 \geq \tfrac{1}{2} (\alpha^{-1}\Delta_k^{-1}(1 + \|B_k\|)) \|s_{k,1}\|^2.
  \end{equation}
  If \(\xi_{\mathrm{cp}}(\Delta_k; x_k, \nu_k) = 0\), then \(s_{k,1} = 0\), and \(x_k\) is first-order stationary with \Cref{prop:iprox-stationarity}.
  If \(x_k\) is not first-order stationary, \(s_{k,1} \neq 0\) according to \Cref{prop:xicp-stat-0}.
  In this case, \Cref{asm:model-diff}, \Cref{asm:model-ubd}, and~\eqref{eq:suff-decrease} lead to
  \begin{equation*}
    \begin{aligned}
      |\rho_k - 1| & = \left| \frac{(f + h)(x_k + s_k) - m(s_k; x_k, B_k)}{m(0; x_k, B_k) - m(s_k; x_k, B_k)} \right|                                                             \\
                   & \le \frac{\kappa_{\mathrm{ubd}}(1 +\|B_k\|)\|s_k\|_2^2}{\kappa_{\mathrm{mdc}} \xi_{\mathrm{cp}}(\Delta_k; x_k, \nu_k)}                                       \\
                   & \le \frac{\kappa_{\mathrm{ubd}}(1 +\|B_k\|) \beta^2\|s_{k, 1}\|_2^2}{\tfrac{1}{2} \kappa_{\mathrm{mdc}} \alpha^{-1}\Delta_k^{-1}(1 + \|B_k\|) \|s_{k,1}\|^2} \\
                   & = \frac{2\kappa_{\mathrm{ubd}} \beta^2 \alpha \Delta_k}{\kappa_{\mathrm{mdc}}}.
    \end{aligned}
  \end{equation*}
  Thus, \(\Delta_k \le \Delta_{\mathrm{succ}}\) implies \(\rho_k \ge \eta_2\) and iteration \(k\) is very successful.
\end{proof}

We set \(\Delta_{\min} := \min(\Delta_0, \, \gamma_1 \Delta_{\mathrm{succ}})\), and we observe that \(\Delta_k \ge \Delta_{\min}\) for all \(k \in \N\).
Motivated by \Cref{prop:smooth-stat}, we use \(\nu_k^{-1/2}\xi_{\mathrm{cp}}(\Delta_k; x_k, \nu_k)^{1/2}\) as our criticality measure.
Let \(0 < \epsilon < 1\), \(k_\epsilon\) be the first iteration such that \(\nu_k^{-1/2} \xi_{\mathrm{cp}}(\Delta_k; x_k; \nu_k)^{1/2} \leq \epsilon\), and
\begin{align*}
  S(\epsilon) & := \{ k = 0, \ldots, k_\epsilon - 1 \mid \rho_k \geq \eta_1 \}, \\
  U(\epsilon) & := \{ k = 0, \ldots, k_\epsilon - 1 \mid \rho_k < \eta_1 \},
\end{align*}
be the set of successful, and unsuccessful iterations until the criticality measure drops below \(\epsilon\), respectively.

At iteration \(k\) of \Cref{alg:tr-nonsmooth}, let \(\sigma_k\) be the number of successful iterations encountered so far:
\begin{equation}%
  \label{eq:def-sigmak}
  \sigma_k = | \{ j = 0, \ldots, k \mid \rho_j \geq \eta_1 \} |, \quad k \in \N.
\end{equation}
We introduce an assumption allowing \(\{B_k\}\) to be unbounded, as long as it is controlled by \(\sigma_k\).

\begin{assumption}%
  \label{asm:hess-approx-growth}
  There are constants \(\mu > 0\) and \(0 \leq p < 1\) such that \(\max_{0 \leq j \leq k} \|B_j\| \leq \mu (1 + \sigma_k^p)\) for all \(k \in \N\).
\end{assumption}

Clearly, \Cref{asm:hess-approx-growth} allows approximations that grow unbounded, though they must not grow too fast.
It reduces to the bounded case when \(p = 0\).
Following the discussion in the introduction, it is possible that quasi-Newton approximations satisfy \Cref{asm:hess-approx-growth}, though that remains to be established.
The bound \(\|B_{j+1}\| \leq \|B_j\| + \kappa_B\) provided by \citet{conn-gould-toint-2000} and \citet{powell-2010} for the BFGS, SR1 and PSB updates, where \(\kappa_B > 0\) is a constant, suggests that in the worst case, certain quasi-Newton approximations could satisfy \Cref{asm:hess-approx-growth} with \(p = 1\).
Unfortunately, in that case, the analysis below would not apply to them.
However, once again, to the best of our knowledge, no bound on quasi-Newton approximations is known at this time.

Note also that we do not consider \(p > 1\) as~\eqref{eq:asm-sum-invmaxBj-inf} might no longer hold, and that would endanger convergence altogether.
We also do not consider here a variant of \Cref{asm:hess-approx-growth} in which \(\sigma_k\) is replaced with \(k\) because model Hessians are typically not updated on unsuccessful iterations---we are not aware of any algorithm that does, though it could of course be done.

We may now establish a variant of \citep[Lemma~\(3.6\)]{aravkin-baraldi-orban-2022} based on \Cref{asm:hess-approx-growth}.
The proof uses a technique similar to that of \citep[Lemma~\(3.6\)]{aravkin-baraldi-orban-2022}, itself inspired from the proofs of \citep{cartis-gould-toint-2022}, except for the management of \Cref{asm:hess-approx-growth}.
When \(p = 0\), \citet{aravkin-baraldi-orban-2022} show that \(|S(\epsilon)| = O(\epsilon^{-2})\).
In the following result, we restrict our attention to the case \(p > 0\).

\begin{lemma}%
  \label{lem:num-successful-unbounded}
  Let \Cref{asm:model-diff} and \Cref{asm:hess-approx-growth} be satisfied with \(p > 0\).
  Assume that \Cref{alg:tr-nonsmooth} generates infinitely many successful iterations when~\Cref{step:stop-crit} is ignored, that the step size \(\nu_k := \alpha \Delta_k / (1 + \|B_k\|(1 + \alpha \Delta_k))\) is selected at each iteration, and that there exists \((f + h)_{\mathrm{low}} \in \R\) such that \((f + h)(x_k) \geq (f + h)_{\mathrm{low}}\) for all \(k \in \N\).
  Let \(\epsilon \in (0, \, 1)\) be small enough that \(\mu + 1 \le \mu |S(\epsilon)|^p\).
  Then,
  \begin{equation}%
    \label{eq:qn-complexity-p}
    |S(\epsilon)| \leq
    \left( 2 \mu (1 + \alpha^{-1} \Delta_{\min}^{-1}) \, \frac{(f + h)(x_0) - (f + h)_{\mathrm{low}}}{\eta_1 \kappa_{\mathrm{mdc}} \epsilon^2} \right)^{1 / (1 - p)} = O\left(\epsilon^{-2/(1-p)}\right).
  \end{equation}
\end{lemma}

\begin{proof}
  Let \(k \in S(\epsilon)\).
  We proceed as in \citep[Lemma~\(3.6\)]{aravkin-baraldi-orban-2022} with the minor corrections made in \citep{aravkin-baraldi-leconte-orban-2023}.
  We have
  \begin{align*}
    (f + h)(x_k) - (f + h)(x_k + s_k) & \geq \eta_1 \kappa_{\mathrm{mdc}} \xi_{\mathrm{cp}}(\Delta_k; x_k, \nu_k)
    \\
                                      & \geq \eta_1 \kappa_{\mathrm{mdc}} \nu_k \epsilon^2
    \\
                                      & = \eta_1 \kappa_{\mathrm{mdc}} \frac{1}{\alpha^{-1} \Delta_k^{-1} + \|B_k\|(1 + \alpha^{-1} \Delta_k^{-1})} \epsilon^2
    \\
                                      & \geq \eta_1 \kappa_{\mathrm{mdc}} \frac{1}{\alpha^{-1} \Delta_{\min}^{-1} + \|B_k\|(1 + \alpha^{-1} \Delta_{\min}^{-1})} \epsilon^2.
  \end{align*}
  We add together the above inequalities over all \(k \in S(\epsilon)\) and use the assumption that \(f + h\) is bounded below to obtain
  \begin{align*}
    (f + h)(x_0) - (f + h)_{\mathrm{low}} & \geq \eta_1 \kappa_{\mathrm{mdc}} \epsilon^2 \sum_{k \in S(\epsilon)} \frac{1}{\alpha^{-1} \Delta_{\min}^{-1} + \|B_k\|(1 + \alpha^{-1} \Delta_{\min}^{-1})}
    \\
                                          & \geq \eta_1 \kappa_{\mathrm{mdc}} \epsilon^2 |S(\epsilon)| \min_{k \in S(\epsilon)} \frac{1}{\alpha^{-1} \Delta_{\min}^{-1} + \|B_k\|(1 + \alpha^{-1} \Delta_{\min}^{-1})}
    \\
                                          & = \eta_1 \kappa_{\mathrm{mdc}} \epsilon^2 |S(\epsilon)| \frac{1}{\max_{k \in S(\epsilon)} (\alpha^{-1} \Delta_{\min}^{-1} + \|B_k\|(1 + \alpha^{-1} \Delta_{\min}^{-1}))}
    \\
                                          & = \eta_1 \kappa_{\mathrm{mdc}} \epsilon^2 |S(\epsilon)| \frac{1}{\alpha^{-1} \Delta_{\min}^{-1} + (\max_{k \in S(\epsilon)} \|B_k\|) (1 + \alpha^{-1} \Delta_{\min}^{-1})}
    \\
                                          & \geq \eta_1 \kappa_{\mathrm{mdc}} \epsilon^2 |S(\epsilon)| \frac{1}{\alpha^{-1} \Delta_{\min}^{-1} + \mu( 1 + |S(\epsilon)|^p) (1 + \alpha^{-1} \Delta_{\min}^{-1})}
    \\
                                          & \geq \eta_1 \kappa_{\mathrm{mdc}} \epsilon^2 |S(\epsilon)| \frac{1}{(\mu + 1 + \mu |S(\epsilon)|^p) (1 + \alpha^{-1} \Delta_{\min}^{-1})},
  \end{align*}
  where we appealed to \Cref{asm:hess-approx-growth} in the penultimate step.

  Because, \(\mu + 1 \le \mu |S(\epsilon)|^p\),
  \[
    (f + h)(x_0) - (f + h)_{\mathrm{low}} \geq \eta_1 \kappa_{\mathrm{mdc}} \epsilon^2 |S(\epsilon)| \frac{1}{2\mu|S(\epsilon)|^p (1 + \alpha^{-1} \Delta_{\min}^{-1})} = \eta_1 \kappa_{\mathrm{mdc}} \epsilon^2 |S(\epsilon)|^{1-p} \frac{1}{2\mu(1 + \alpha^{-1} \Delta_{\min}^{-1})},
  \]
  which establishes~\eqref{eq:qn-complexity-p}.
\end{proof}

If there are infinitely many successful iterations, the inequality \(\mu + 1 > \mu |S(\epsilon)|^p\) can only hold for all sufficiently small \(\epsilon > 0\) if \(p = 0\).

The complexity bound \(|S(\epsilon)| = O(\epsilon^{-2/(1-p)})\) also holds for \(p = 0\), as it reduces to that of \citet{aravkin-baraldi-orban-2022}.
We obtain a complexity bound of \(O(\epsilon^{-5/2})\) for \(p = \tfrac{1}{5}\) and \(O(\epsilon^{-3})\) for \(p = \tfrac{1}{3}\).
In other words, the faster the growth of \(\|B_k\|\), the worse the deterioration of the complexity bound.

A bound on the number of unsuccessful iterations is obtained using the technique of \citet{cartis-gould-toint-2022}.

\begin{proposition}[{\protect \citealp[Lemma~\(3.7\)]{aravkin-baraldi-orban-2022}}]%
  \label{prop:num-unsuccessful-unbounded}
  Under the assumptions of \Cref{lem:num-successful-unbounded},
  \begin{equation}
    \label{eq:number-unsuccessful-ubd}
    |U(\epsilon)| \leq \log_{\gamma_2} (\Delta_{\min} / \Delta_0) + |S(\epsilon)| \, |\log_{\gamma_2}(\gamma_4)|.
  \end{equation}
\end{proposition}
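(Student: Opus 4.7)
The plan is to exploit the multiplicative nature of the trust-region radius updates in \Cref{step:deltak-update}, combined with the uniform lower bound $\Delta_k \geq \Delta_{\min}$ established immediately after the preceding theorem.

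First I would track what happens to $\Delta_k$ over all iterations indexed in $I(\epsilon) = S(\epsilon) \cup U(\epsilon)$. On any successful iteration (whether successful or very successful), the update rule together with the cap by $\Delta_{\max}$ gives $\Delta_{k+1} \leq \gamma_4 \Delta_k$, and on any unsuccessful iteration it gives $\Delta_{k+1} \leq \gamma_2 \Delta_k$. Telescoping these multiplicative bounds across the $|S(\epsilon)| + |U(\epsilon)|$ iterations in $I(\epsilon)$ yields
\[
\Delta_{k+1} \leq \Delta_0 \, \gamma_4^{|S(\epsilon)|} \, \gamma_2^{|U(\epsilon)|}.
\]

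Second, I would combine this with the lower bound $\Delta_{k+1} \geq \Delta_{\min}$, which holds for all $k$ as noted right after the previous theorem, to obtain
\[
\Delta_{\min} \leq \Delta_0 \, \gamma_4^{|S(\epsilon)|} \, \gamma_2^{|U(\epsilon)|}.
\]
Taking logarithms with base $\gamma_2$—which, since $\gamma_2 < 1$, reverses the inequality—produces
\[
\log_{\gamma_2}(\Delta_{\min}/\Delta_0) \geq |S(\epsilon)| \log_{\gamma_2}(\gamma_4) + |U(\epsilon)|,
\]
and rearranging, together with the fact that $\log_{\gamma_2}(\gamma_4) < 0$ because $\gamma_4 > 1 > \gamma_2$, yields the claimed inequality~\eqref{eq:number-unsuccessful-ubd} after replacing $-\log_{\gamma_2}(\gamma_4)$ by $|\log_{\gamma_2}(\gamma_4)|$.

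The only subtlety is making sure that the sign bookkeeping when changing the base of the logarithm to a number less than one is carried out correctly, and that the $\Delta_{\max}$ cap in the update rule does not affect the upper bound $\Delta_{k+1} \leq \gamma_4 \Delta_k$ on successful iterations (it can only make $\Delta_{k+1}$ smaller, so the bound remains valid). This is essentially the argument of \citet[Theorem~2.4.1]{cartis-gould-toint-2022} transcribed to the present setting, and no use of \Cref{asm:hess-approx-growth} is needed beyond what is already absorbed into the lower bound $\Delta_{\min}$.
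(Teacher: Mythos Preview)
Your proposal is correct and follows essentially the same route as the paper's own proof: both telescope the multiplicative update rule for $\Delta_k$ over $I(\epsilon)$, combine with the uniform lower bound $\Delta_{\min}$, and take logarithms (the paper uses natural logarithms and then divides by $\log \gamma_2$, whereas you work directly in base $\gamma_2$, but the algebra is identical). Your remarks about the $\Delta_{\max}$ cap and the sign reversal are accurate and, if anything, make the exposition slightly more careful than the paper's.
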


\begin{proof}
  The proof is a minor modification of that of \citep[Lemma~\(3.7\)]{aravkin-baraldi-orban-2022}.
  We provide it for completeness.
  The update rule of \(\Delta_k\) in \Cref{step:deltak-update} indicates that
  \begin{equation*}
    \Delta_{\min} \le \Delta_{k_{\epsilon}-1} \le \min(\Delta_0 \gamma_2^{|U(\epsilon)|} \gamma_4^{|S(\epsilon)|}, \, \Delta_{\max}) \le \Delta_0 \gamma_2^{|U(\epsilon)|} \gamma_4^{|S(\epsilon)|}.
  \end{equation*}
  As \(0 < \gamma_2 < 1\), we take the logarithm of the above inequalities to obtain
  \begin{equation*}
    |U(\epsilon)| \log(\gamma_2) + |S(\epsilon)|\log(\gamma_4) \ge \log(\Delta_{\min} / \Delta_0),
  \end{equation*}
  which leads to~\eqref{eq:number-unsuccessful-ubd}.
\end{proof}

We caution the reader that as \(p \uparrow 1\), \Cref{lem:num-successful-unbounded} does not provide any useful bound.
Thus, our analysis really only applies to fixed \(p < 1\) and no limit should be taken in~\eqref{eq:qn-complexity-p}.
However, as \citet{powell-2010} surmises in his concluding remarks, the number of iterations should remain finite, ``monstrous'' though it may be, when \(p = 1\).
Specifically, \citeauthor{powell-2010} conjectures a pessimistic bound of the form \(O(\exp(\exp(1/\epsilon)))\).

The following result follows from \Cref{lem:num-successful-unbounded} and \Cref{prop:num-unsuccessful-unbounded}.

\begin{corollary}%
  \label{cor:liminf-xicp}
  Under the assumptions of \Cref{lem:num-successful-unbounded}, \(\liminf \nu_k^{-1/2} \xi_{\mathrm{cp}}(\Delta_k; x_k, \nu_k)^{1/2} = 0\).
\end{corollary}

\begin{proof}
  The first part of this result is obtained similarly as in \citep[Theorem~\(3.8\)]{aravkin-baraldi-orban-2022}.
  \Cref{lem:num-successful-unbounded} and \Cref{prop:num-unsuccessful-unbounded} indicate that
  \begin{equation*}
    |S(\epsilon)| + |U(\epsilon)| = O \left(\epsilon^{-2/(1-p)} \right),
  \end{equation*}
  thus \(\liminf \nu_k^{-1/2} \xi_{\mathrm{cp}}(\Delta_k; x_k, \nu_k)^{1/2} = 0\).
\end{proof}

To conclude this section, we examine conditions under which limit points of \(\{x_k\}\) are first-order stationary for~\eqref{eq:nlp}.
We first establish results about the first-order stationarity conditions of~\eqref{eq:ipg-iter-sk1}.

\begin{lemma}%
  \label{lem:sk1-to-0}
  Under the assumptions of \Cref{lem:num-successful-unbounded}, there exists an infinite index set \(N \subseteq \N\) such that
  \begin{enumerate}
    \item\label{itm:xi/nu-to-0}%
      \(\{\nu_k^{-1/2} \xi_{\mathrm{cp}}(\Delta_k; x_k, \nu_k)^{1/2}\}_N \to 0\),
    \item\label{itm:s1-to-0}%
      \(\{\nu_k^{-1} s_{k,1}\}_{k \in N} \to 0\), and therefore, \(\{s_{k,1}\}_N \to 0\), and
    \item\label{itm:s-to-0}%
      \(\{s_k\}_N \to 0\).
  \end{enumerate}
\end{lemma}

\begin{proof}
  \Cref{cor:liminf-xicp} ensures the existence of an infinite index set \(N \subseteq \N\) such that claim~\ref{itm:xi/nu-to-0} holds.
  By~\eqref{eq:xi1-suff-decrease},
  \(
  \nu_k^{-1/2} \xi_{\mathrm{cp}}(\Delta_k; x_k, \nu_k)^{1/2} \ge \nu_k^{-1} \|s_{k,1}\| / \sqrt{2}
  \).
  Thus, \(\{\nu_k^{-1} s_{k,1}\}_N \to 0\).
  As \(\liminf \nu_k \geq 0\) and \(\sup \nu_k < \infty\) always hold, claim~\ref{itm:s1-to-0} must hold.
  Because \(\|s_k\| \leq \beta \|s_{k,1}\|\) for all \(k\) by \Cref{step:sk-computation} of \Cref{alg:tr-nonsmooth}, claim~\ref{itm:s-to-0} holds.
\end{proof}

Because \(\Delta_k \ge \Delta_{\min} > 0\) for all \(k\), and \(\{s_{k,1}\}_N \to 0\) by \Cref{lem:sk1-to-0}, there exists \(k_0 \in N\) such that for all \(k \in N\) with \(k \ge k_0\), \(s_{k,1}\) is not on the boundary of \(\Delta_k \B\).
By~\eqref{eq:ipg-iter-sk1}, we have for all \(k \in N\) with \(k \geq k_0\),
\begin{equation}%
  \label{eq:sk1-no-tr}
  s_{k,1} \in \argmin{s} \ m_{\mathrm{cp}}(s; x_k, \nu_k) + \chi(x_k + s \mid [\ell, \, u]).
\end{equation}

In the following, we define, for all \(x\) and \(s \in \R^n\),
\begin{equation}%
  \label{eq:def-psi-hat}
  \widehat{\psi}(s; x) \coloneq \psi(s; x) + \chi(x + s \mid [\ell, u]).
\end{equation}

\begin{lemma}%
  \label{lem:s1-stationarity}
  Let \(N\) be the infinite index set of \Cref{lem:sk1-to-0}.
  Then, there exists \(k_0 \in N\) such that for all \(k \in N\) with \(k \ge k_0\),
  \begin{equation}
    \label{eq:s1-stationarity}
    -\nu_k^{-1} s_{k,1} \in \nabla f(x_k) + \partial \widehat{\psi}(s_{k,1}; x_k).
  \end{equation}
\end{lemma}

\begin{proof}
  The claim follows directly from the first-order stationarity conditions of~\eqref{eq:sk1-no-tr}.
\end{proof}

In view of \Cref{lem:sk1-to-0,lem:s1-stationarity}, for all \(\epsilon > 0\), there exists \(k_\epsilon \in \N\) such that for all \(k \geq k_\epsilon\) with \(k \in N\), there is \(u_k \in \partial \widehat{\psi}(s_{k,1}; x_k)\) satisfying
\[
  \|\nabla f(x_k) + u_k\| \leq \epsilon.
\]
The above suggests that limit points of \(\{(x_k, u_k)\}_{k \in N}\) may be expected to be stationary for~\eqref{eq:nlp} under certain conditions.
We now make this last statement more precise.

When \(\liminf \nu_k > 0\), which happens when \(\{B_k\}\) remains bounded, and when models \(\psi\) are lsc in the joint variables \((s, x)\), \citep[Proposition~\(3.10\)]{aravkin-baraldi-orban-2022} established that \(\xi_{\mathrm{cp}}(\Delta_{\min}; \cdot, \cdot)\) is lsc and that if \((\bar{x}, \bar{\nu})\) is a limit point of \(\{(x_k, \nu_k)\}\), then \(\bar{x}\) is first-order stationary for~\eqref{eq:nlp}.
However, that result does not take explicit bound constraints into account.

We now provide an alternative analysis before examining the case where \(\{\nu_k\} \to 0\).

If \(\liminf_{k \in N} \nu_k > 0\), there exists an infinite index \(N_1 \subseteq N\) such that \(\{\nu_k\}_{k \in N_1} \to \bar{\nu} > 0\).
The following results hinge around epigraphical convergence \citep[Chapter~\(7\)]{rockafellar-wets-1998} and consist in determining the epigraphical limit of the sequence of models.

Consider the situation where \(\{x_k\}_{k \in N_1}\) has a limit point, or, without loss of generality, that \(\{x_k\}_{k \in N_1} \to \bar{x}\).
It does not follow that \(\{\chi(x_k + \cdot \mid [\ell, \, u])\}_{k \in N_1} \to \chi(\bar{x} + \cdot \mid [\ell, \, u])\) pointwise or continuously.
Indeed, if \(\bar x + s\) is on the boundary of \([\ell, \, u]\) and \(x_k + s_k\) lies outside of \([\ell, \, u]\) for all \(k \in N_1\) with \(\{x_k + s_k\}_{k \in N_1} \to \bar x + s\), \(\{\chi(x_k + s_k \mid [\ell, \, u])\}_{k \in N_1} \to +\infty\) while \(\chi(\bar{x} + s \mid [\ell, \, u]) = 0\).
However, convergence occurs epigrahically.

\begin{lemma}%
  \label{lem:elim-chik}
  Let \(N\) be the infinite index set of \Cref{lem:sk1-to-0}.
  Let \(\{x_k\}_{k \in N_1} \to \bar{x} \in [\ell, \, u]\), where \(N_1 \subseteq N\) is defined as above.
  Then
  \[
    \elim_{k \in N_1} \chi(x_k + \cdot \mid [\ell, \, u]) = \chi(\bar x + \cdot \mid [\ell, \, u]).
  \]
\end{lemma}

\begin{proof}
  The result follows from \citep[Proposition~\(7.4f\)]{rockafellar-wets-1998} applied to \(C_k := x_k + [\ell, u]\) and \(C := \bar x + [\ell, u]\), since \(\{C_k\}_{k \in N_1} \to C\).
\end{proof}

\begin{theorem}%
  \label{thm:cont-conv}
  Let \(N\) be the infinite index set of \Cref{lem:sk1-to-0}.
  Let \(\{x_k\}_{k \in N_1} \to \bar{x} \in [\ell, \, u]\), where \(N_1 \subseteq N\) is defined as above.
  Assume that there is \(\widebar{\psi} : \R^n \to \widebar{\R}\) such that \(\{\psi(\cdot; x_k)\}_{k \in N_1} \to \widebar{\psi}\) continuously, and that satisfies \Cref{asm:models} as a model about \(\bar{x}\), i.e., \(\widebar{\psi}(0) = h(\bar{x})\) and \(\partial \widebar{\psi}(0) \subseteq \partial h(\bar{x})\).
  Assume further that the constraint qualification~\eqref{eq:cq} is satisfied at \(s = 0\) for
  \begin{equation*}%
    \minimize{s} \ \widebar{m}_{\mathrm{cp}}(s; \bar{x}, \bar{\nu}) + \chi(\bar{x} + s \mid [\ell, \, u]),
    \qquad
    \widebar m_{\mathrm{cp}}(s; \bar{x}, \bar{\nu}) \coloneq \varphi_{\mathrm{cp}}(s; \bar{x}) + \tfrac{1}{2} \bar{\nu}^{-1} \|s\|^2 + \widebar\psi(s),
  \end{equation*}
  and that it is satisfied at \(\bar x\) for~\eqref{eq:nlp}.
  If \(-\infty < \inf_s \widebar{m}_{\mathrm{cp}}(s; \bar{x}, \bar{\nu}) + \chi(\bar{x} + s \mid [\ell, \, u]) < \infty\), \(\bar{x}\) is stationary for~\eqref{eq:nlp}.
\end{theorem}

\begin{proof}
  Continuity of \(\nabla f\) and \citep[Theorem~\(7.11a\) and \(b\)]{rockafellar-wets-1998} ensure that
  \begin{equation}%
    \label{eq:elim-continuous-terms}
    \elim_{k \in N_1} \varphi_{\mathrm{cp}}(\cdot; x_k) + \tfrac{1}{2} \nu_k^{-1} \|\cdot\|^2 = \varphi_{\mathrm{cp}}(\cdot; \bar{x}) + \tfrac{1}{2} \bar{\nu}^{-1} \|\cdot\|^2,
  \end{equation}
  and the convergence is continuous.
  By \Cref{lem:elim-chik} and \citep[Theorem~\(7.46b\)]{rockafellar-wets-1998},
  \[
    \elim_{k \in N_1} \varphi_{\mathrm{cp}}(\cdot; x_k) + \tfrac{1}{2} \nu_k^{-1} \|\cdot\|^2 + \chi(x_k + \cdot \mid [\ell, \, u]) = \varphi_{\mathrm{cp}}(\cdot; \bar{x}) + \tfrac{1}{2} \bar{\nu}^{-1} \|\cdot\|^2 + \chi(\bar{x} + \cdot \mid [\ell, \, u]).
  \]
  Again, \citep[Theorem~\(7.46b\)]{rockafellar-wets-1998} and the continuous convergence of \(\{\psi(\cdot; x_k)\}_{k \in N_1}\) yield
  \[
    \elim_{k \in N_1} m_{\mathrm{cp}}(\cdot; x_k, \nu_k) + \chi(x_k + \cdot \mid [\ell, \, u]) = \widebar m_{\mathrm{cp}}(\cdot; \bar{x}, \bar{\nu}) + \chi(\bar{x} + \cdot \mid [\ell, \, u]).
  \]
  Because \(s_{k,1} \in \argmin{s} m_{\mathrm{cp}}(s; x_k, \nu_k) + \chi(x_k + s \mid [\ell, \, u])\) for all \(k \in N_1\) and \(\{s_{k,1}\}_{k \in N_1} \to 0\) by \Cref{lem:sk1-to-0} and~\eqref{eq:sk1-no-tr}, we obtain from \citep[Theorem~\(7.31b\)]{rockafellar-wets-1998} that
  \[
    0 \in \argmin{s} \widebar m_{\mathrm{cp}}(s; \bar{x}, \bar{\nu}) + \chi(\bar{x} + s \mid [\ell, \, u]),
  \]
  which implies that \(\bar{x}\) is stationary for~\eqref{eq:nlp}.
\end{proof}

For the limiting model \(\widebar \psi\) of \Cref{thm:cont-conv} to satisfy \Cref{asm:models}, we must have \(\widebar \psi(0) = h(\bar{x})\) and \(\partial \widebar \psi(0; \bar{x}) \subseteq \partial h(\bar{x})\).
We now review two important examples in practice.

A widely used model is simply \(\psi(s; x_k) = h(x_k + s)\) for all \(k \in \N\).
Clearly, when \(h\) is continuous, the limiting model satisfies \Cref{asm:models}.
A common situation occurs when \(h(x) = g(c(x))\), where \(c: \R^n \to \R^m\) is \(\mathcal{C}^1\) and \(g: \R^m \to \R\) is continuous.
In penalty scenarios, \(g\) is a norm.
It is then natural to choose \(\psi(s; x_k) := g(c(x_k) + \nabla c(x_k) s)\) for all \(k\).
Again, the limiting model satisfies \Cref{asm:models}.

In the absence of bound constraints, we may weaken the assumption on continuous convergence of \(\{\psi(\cdot; x_k)\}\) in \Cref{thm:cont-conv}.
We require another technical lemma.
Recall first that for any sequence \(\{f_k\}\) where \(f_k: \R^n \to \widebar{\R}\), \citep[Proposition~\(7.2\)]{rockafellar-wets-1998} states that
\begin{equation}%
  \label{eq:elimsup}
    (\elimsup f_k)(x) = \min \{\alpha \in \widebar{\R} \mid \exists \{x_k\} \to x, \ \limsup f_k(x_k) = \alpha \},
    \quad (x \in \R^n).
\end{equation}
Note that the \(\min\) in~\eqref{eq:elimsup} is attained.

\begin{lemma}%
  \label{lem:sum-elimsup}
  For \(k \in \N\), let \(\phi_k\), \(\phi : \R^n \to \R\),\footnote{Note that \(\phi_k\) and \(\phi\) take finite values.} and \(\psi_k\), \(\widebar{\psi} : \R^n \to \widebar{\R}\).
  Assume that \(\{\phi_k\} \to \phi\) continuously, and that \(\elimsup \psi_k = \widebar{\psi}\).
  Then, \(\elimsup \phi_k + \psi_k = \phi + \widebar{\psi}\).
\end{lemma}

\begin{proof}
  Let \(x \in \R^n\).
  For any sequence \(\{x_k\}\),
  \[
    \liminf \phi_k(x_k) + \limsup \psi_k(x_k) \leq \limsup (\phi_k(x_k) + \psi_k(x_k)) \leq \limsup \phi_k(x_k) + \limsup \psi_k(x_k).
  \]
  In particular, for any sequence \(\{x_k\} \to x\), continuous convergence of \(\{\phi_k\}\) to \(\phi\) implies that \(\liminf \phi_k(x_k) = \limsup \phi_k(x_k) = \lim \phi_k(x_k) = \phi(x)\).
  Thus,
  \[
    \phi(x) + \limsup \psi_k(x_k) = \limsup (\phi_k(x_k) + \psi_k(x_k)).
  \]
  By~\eqref{eq:elimsup}, there is a sequence \(\{x_k^{\phi+\psi}\} \to x\) such that \(\limsup (\phi_k(x_k^{\phi+\psi}) + \psi_k(x_k^{\phi+\psi})) = (\elimsup \phi_k + \psi_k)(x)\).
  For that sequence,~\eqref{eq:elimsup} also shows that \(\limsup \psi_k(x_k^{\phi+\psi}) \geq \widebar{\psi}(x)\).
  Hence,
  \[
    \phi(x) + \widebar{\psi}(x) \leq \elimsup (\phi_k + \psi_k)(x).
  \]
  Similarly, there is a sequence \(\{x_k^\psi\} \to x\) such that \(\limsup \psi_k(x_k^\psi) = \widebar{\psi}(x)\).
  For that sequence, we now have \(\limsup (\phi_k(x_k^\psi) + \psi_k(x_k^\psi)) \geq (\elimsup \phi_k + \psi_k)(x)\), and hence,
  \[
    \phi(x) + \widebar{\psi}(x) \geq \elimsup (\phi_k + \psi_k)(x).
  \]
  Since \(x\) was arbitrary, we obtain the claim.
\end{proof}

In the following result, continuous convergence of \(\{\psi(\cdot; x_k)\}_{k \in N_1}\) is replaced with existence of the epigraphical \(\limsup\) and continuous convergence with respect to \(\{s_{k,1}\}_{k \in N_1} \to 0\).
The relevance of the \(\elimsup\) in this context stems from \citep[Proposition~\(7.30\)]{rockafellar-wets-1998}.

\begin{theorem}%
  \label{thm:econv-sup}
  Assume~\eqref{eq:nlp} has no bound constraints.
  Let \(N\) be the infinite index set of \Cref{lem:sk1-to-0}, and \(\{x_k\}_{k \in N_1} \to \bar{x}\), where \(N_1 \subseteq N\) is defined as above.
  Assume
  \[
    \widebar{\psi} \coloneq \elimsup_{k \in N_1} \psi(\cdot; x_k)
  \]
  is not identically \(+\infty\) and satisfies \Cref{asm:models} as a model about \(\bar{x}\), i.e., \(\widebar{\psi}(0) = h(\bar{x})\) and \(\partial \widebar{\psi}(0) \subseteq \partial h(\bar{x})\).
  If \(\{\psi(s_{k,1}; x_k)\}_{k \in N_1} \to \widebar{\psi}(0)\), then \(\bar{x}\) is stationary for~\eqref{eq:nlp}.
\end{theorem}

\begin{proof}
  As in the proof of \Cref{thm:cont-conv},~\eqref{eq:elim-continuous-terms} holds.
  \Cref{lem:sum-elimsup} yields
  \[
    \elimsup_{k \in N_1} m_{\mathrm{cp}}(\cdot; x_k, \nu_k) = \widebar m_{\mathrm{cp}}(\cdot; \bar{x}, \bar{\nu}),
    \qquad
    \widebar m_{\mathrm{cp}}(s; \bar{x}, \bar{\nu}) \coloneq \nabla f(\bar{x})^T s + \tfrac{1}{2} \bar{\nu}^{-1} \|s\|^2 + \widebar\psi(s).
  \]
  If \(\{\psi(s_{k,1}; x_k, \nu_k)\}_{k \in N_1} \to \widebar\psi(0)\), then \(\{m_{\mathrm{cp}}(s_{k,1}; x_k, \nu_k)\}_{k \in N_1} \to \widebar m_{\mathrm{cp}}(0; \bar{x}, \bar{\nu})\).
  By \citep[Proposition~\(7.30\)]{rockafellar-wets-1998}, we obtain that \(0 \in \argmin{s} \widebar m_{\mathrm{cp}}(s; \bar{x}, \bar{\nu})\), which implies that \(\bar{x}\) is stationary for~\eqref{eq:nlp}.
\end{proof}

Finally, we may trade the continuous convergence of \(\{\psi(\cdot; x_k)\}_{k \in N_1}\) with respect to  \(\{s_{k,1}\}_{k \in N_1}\) for the existence of the epigraphical limit of the models \(\{\psi(\cdot; x_k)\}_{k \in N_1}\) and their pointwise convergence to that limit.

\begin{theorem}%
  \label{thm:econv}
  Assume~\eqref{eq:nlp} has no bound constraints.
  Let \(N\) be the infinite index set of \Cref{lem:sk1-to-0}, and \(\{x_k\}_{k \in N_1} \to \bar{x}\), where \(N_1 \subseteq N\) is defined as above.
  Assume
  \[
    \widebar{\psi} \coloneq \elim_{k \in N_1} \psi(\cdot; x_k)
  \]
  exists and satisfies \Cref{asm:models} as a model about \(\bar{x}\), i.e., \(\widebar{\psi}(0) = h(\bar{x})\) and \(\partial \widebar{\psi}(0) \subseteq \partial h(\bar{x})\).
  Assume further that \(-\infty < \inf \widebar\psi < +\infty\), and that \(\lim_{k \in N_1} \psi(s; x_k) = \widebar{\psi}(s; \bar{x})\) for all \(s \in \R^n\).
  Then \(\bar{x}\) is stationary for~\eqref{eq:nlp}.
\end{theorem}

\begin{proof}
  As in the proof of \Cref{thm:cont-conv},~\eqref{eq:elim-continuous-terms} holds.
  Since \(\varphi_{\mathrm{cp}}(s, \bar{x}) > -\infty\) and \(\widebar{\psi}(s) > -\infty\) for all \(s \in \R^n\), our assumptions guarantee that \citep[Theorem~\(3.2\), conditions~(I) and (i)]{burke-hoheisel-2015} are satisfied for all \(s \in \R^n\).
  Therefore, \(\elim_{k \in N_1} m_{\mathrm{cp}}(\cdot; x_k, \nu_k) = \widebar m_{\mathrm{cp}}(\cdot; \bar{x}, \bar{\nu})\).
  Because \(-\infty < \inf \widebar m_{\mathrm{cp}}(\cdot; \bar{x}, \bar{\nu}) < \infty\), \(s_{k,1} \in \argmin{s} m_{\mathrm{cp}}(s; x_k, \nu_k)\) for all \(k \in N_1\), and recalling point~\ref{itm:s1-to-0} of \Cref{lem:sk1-to-0}, \citep[Theorem~\(7.31b\)]{rockafellar-wets-1998} implies that \(0 \in \argmin{s} \widebar m_{\mathrm{cp}}(s; \bar{x}, \bar{\nu})\), which proves that \(\bar{x}\) is stationary for~\eqref{eq:nlp}.
\end{proof}

When \(\liminf \nu_k\) may be zero, i.e., when \(\{B_k\}\) may not be bounded, we work directly with subdifferentials.

\begin{modelassumption}%
  \label{asm:subgradient-ensemble-limit-hat}
  There exists a model \(\psi(\cdot; \bar{x})\) that satisfies \Cref{asm:models} such that, for subsequences \(\{s_{k,1}\}_N \to 0\) and \(\{x_k\}_N \to \bar x \in [\ell, u]\) such that for all \(k \in N\), \(x_k + s_{k,1} \in [\ell, u]\),
  \begin{equation}
    \limsup_{k \in N} \partial \widehat{\psi}(s_{k,1}; x_k) \subseteq \partial \widehat{\psi}(0; \bar x),
  \end{equation}
  where \(\widehat{\psi}\) is defined in~\eqref{eq:def-psi-hat}.
\end{modelassumption}

\Cref{asm:subgradient-ensemble-limit-hat} holds, among others, in the following cases:
\begin{enumerate}
  \item When \(\psi(\cdot; x_k)\) and \(\psi(\cdot; \bar x)\) are proper, lsc, convex functions with \(\psi(\cdot; x_k) \overset{e}{\to} \psi(\cdot; \bar x)\), and \(0\) lies in the interior of \(\dom \psi(\cdot; \bar x) - \dom \chi(\bar{x} + \cdot \mid [\ell, u]) = \{s_1 - s_2 \mid \psi(s_1; \bar{x}) < +\infty \text{ and } \ell \leq \bar{x} + s_2 \leq u\}\).
  Indeed, in that case, \(\widehat{\psi}(\cdot; x_k)\) and \(\widehat{\psi}(0; \bar{x})\) are also proper, lsc and convex, and \citep[Theorem~\(7.47b\)]{rockafellar-wets-1998} guarantees that \(\widehat{\psi}(\cdot; x_k) \overset{e}{\to} \widehat{\psi}(\cdot; \bar{x})\), thus Attouch's theorem \citep[Theorem~\(12.35\)]{rockafellar-wets-1998} holds.
  Extensions to non-convex functions under more sophisticated assumptions are established by \citet{penot-1995,poliquin-1992} and references therein.
  \item When \(\psi(s; x) = h(x + s)\) and \(h(x_k + s_{k,1}) \to h(\bar x)\), using \citep[Proposition~\(8.7\)]{rockafellar-wets-1998} applied to \(\{x_k + s_{k,1}\}_N \to \bar x\).
\end{enumerate}

We may now establish the following result.

\begin{theorem}%
  \label{thm:x-stationary}
  Let the assumptions of \Cref{lem:num-successful-unbounded} be satisfied.
  Let \Cref{asm:subgradient-ensemble-limit-hat} hold for the infinite index \(N\) of \Cref{lem:sk1-to-0}, and assume that \(\{x_k\}_N \to \bar x\).
  Assume further that the constraint qualification~\eqref{eq:cq} is satisfied at \(s = 0\) for
  \begin{equation}%
    \label{eq:limiting-model}
    \minimize{s} \ m_{\mathrm{cp}}(s; \bar{x}, \bar{\nu}) + \chi(\bar{x} + s \mid [\ell, \, u]),
  \end{equation}
  for some \(\bar{\nu} > 0\), and that it is satisfied at \(\bar x\) for~\eqref{eq:nlp}.
  Then, \(\bar x\) is first-order stationary.
\end{theorem}

\begin{proof}
  By \Cref{lem:s1-stationarity}, there exists \(u_k \in \partial \widehat{\psi}(s_{k,1}; x_k)\) for all \(k \in N\) such that \(-\nu_k^{-1} s_{k,1} = \nabla f(x_k) + u_k\).
  By \Cref{lem:sk1-to-0}, the convergence of \(\{x_k\}_N\) and continuity of \(\nabla f\), \(\{u_k\}\) converges.
  Let \(\bar{u}\) be its limit.
  In the limit over \(k \in N\), we obtain \(\bar{u} = -\nabla f(\bar{x})\).
  \Cref{asm:subgradient-ensemble-limit-hat} implies that \(\bar{u} \in \partial \widehat{\psi}(0; \bar{x})\).
  Because the constraint qualification is satisfied at \(s = 0\) for~\eqref{eq:limiting-model}, \citep[Corollary~\(10.9\)]{rockafellar-wets-1998} and \Cref{asm:models} yield \(\bar{u} \in \partial \psi(0; \bar{x}) + N_{[\ell, \, u]}(\bar{x}) \subseteq \partial h(\bar{x}) + N_{[\ell, \, u]}(\bar{x})\).
  Thus, the first-order stationarity conditions of~\eqref{eq:nlp} under~\eqref{eq:cq} hold.
\end{proof}

In \Cref{thm:x-stationary}, the value of \(\bar{\nu}\) is unimportant as it plays no role in the subdifferential of the objective of~\eqref{eq:limiting-model} at \(s = 0\).

\section{Sharpness of the complexity bound}%
\label{sec:sharp}

In this section, we show that the bound of \Cref{lem:num-successful-unbounded} is attained using the techniques of \citet[Theorem~\(2.2.3\)]{cartis-gould-toint-2022}.
Even though those authors only use said techniques to construct examples under the assumption that model Hessians remain bounded, they can be used under \Cref{asm:hess-approx-growth} as well because the number of values to interpolate before a stopping condition is met is always finite.
We have not seen those techniques used in the present context elsewhere in the literature.

For \(0 < \epsilon \le 1/2\), we explicitly construct \(k_{\epsilon} = \lfloor \epsilon^{-2/(1-p)} \rfloor\) iterates of \Cref{alg:tr-nonsmooth} with \(n = 1\) and \(h = 0\), so that \(\nu_k^{-1/2}\xi_{\mathrm{cp}}(\Delta_k; x_k, \nu_k)^{1/2} > \epsilon\) for \(k=0, \ldots, k_{\epsilon} - 1\), and \(\nu_{k_{\epsilon}}^{-1/2}\xi_{\mathrm{cp}}(\Delta_{k_{\epsilon}}; x_{k_{\epsilon}}, \nu_{k_{\epsilon}})^{1/2} = \epsilon\).
Then, we invoke \citep[Theorem~\(A.9.2\)]{cartis-gould-toint-2022} to establish that there exists \(f: \R \to \R\) in~\eqref{eq:nlp} that interpolates our iterates and satisfies our assumptions.
The following result is a special case of \citep[Theorem~\(A.9.2\)]{cartis-gould-toint-2022}.

\begin{proposition}[Hermite interpolation with function and gradient evaluations]%
  \label{prop:hermite-interpol}
  Let \(k_{\epsilon}\) be a positive integer, \(\{f_k\}\), \(\{g_k\}\) and \(\{x_k\}\) be sequences of numbers given for \(k \in \{0, \ldots, k_{\epsilon}\}\).
  Assume that for \(k \in \{0, \ldots, k_{\epsilon}\}\), \(s_k = x_{k+1} - x_k > 0\), and that for all \(k \in \{0, \ldots, k_{\epsilon}-1\}\),
  \begin{subequations}%
    \label{eq:fk-gk-ineq}
    \begin{align}
       & |f_{k+1} - (f_k + g_k s_k)| \le \kappa_f s_k^2, \\
       & |g_{k+1} - g_k| \le \kappa_f s_k,
    \end{align}
  \end{subequations}
  for some constant \(\kappa_f \ge 0\).
  Then, there exists \(f: \R \to \R\) continuously differentiable such that
  \begin{equation*}
    f(x_k) = f_k \quad \text{and} \quad f'(x_k) = g_k.
  \end{equation*}
  In addition, if
  \begin{equation*}
    |f_k| \le \kappa_f, \quad |g_k| \le \kappa_f \quad \text{and} \quad s_k \le \kappa_f,
  \end{equation*}
  then \(|f|\) and \(|f'|\) are bounded by a constant depending only on \(\kappa_f\).
\end{proposition}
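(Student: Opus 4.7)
The plan is to construct $f$ by piecing together cubic Hermite interpolants on each subinterval $[x_k, x_{k+1}]$. On each such interval (nondegenerate since $s_k > 0$), I would define $p_k$ as the unique cubic polynomial satisfying the four Hermite conditions $p_k(x_k) = f_k$, $p_k(x_{k+1}) = f_{k+1}$, $p_k'(x_k) = g_k$, $p_k'(x_{k+1}) = g_{k+1}$; existence and uniqueness follow from standard unisolvence of cubic Hermite interpolation. Pasting the pieces together on $[x_0, x_{k_\epsilon}]$ automatically yields a $\mathcal{C}^1$ function, because consecutive pieces share both value and derivative at each interior knot. I would then extend $f$ affinely to all of $\R$ using slope $g_0$ to the left of $x_0$ and slope $g_{k_\epsilon}$ to the right of $x_{k_\epsilon}$, which preserves $\mathcal{C}^1$ regularity.

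For the boundedness claim, I would rescale to $t = (x - x_k)/s_k \in [0,1]$ and write $p_k$ in the standard cubic Hermite basis $(H_{00}, H_{10}, H_{01}, H_{11})$:
\[
  p_k(x) = H_{00}(t) f_k + H_{10}(t) s_k g_k + H_{01}(t) f_{k+1} + H_{11}(t) s_k g_{k+1}.
\]
Using~\eqref{eq:fk-gk-ineq} to write $f_{k+1} = f_k + g_k s_k + r_k$ with $|r_k| \le \kappa_f s_k^2$ and $g_{k+1} = g_k + d_k$ with $|d_k| \le \kappa_f s_k$, the two elementary basis identities $H_{00} + H_{01} \equiv 1$ and $H_{10} + H_{01} + H_{11} \equiv t$ telescope the expression to
\[
  p_k(x) = f_k + (x - x_k) g_k + H_{01}(t)\, r_k + H_{11}(t)\, s_k d_k,
\]
with an analogous identity for $p_k'$ obtained by differentiation. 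Since the Hermite basis functions and their derivatives are bounded by absolute constants on $[0,1]$, this gives $|p_k(x) - f_k - (x - x_k) g_k| \le C \kappa_f s_k^2$ and $|p_k'(x) - g_k| \le C \kappa_f s_k$ on $[x_k, x_{k+1}]$ for an absolute constant $C$. Under the extra assumption $|f_k|, |g_k|, s_k \le \kappa_f$, combining these deviations with the bounds on the base values $f_k$ and $g_k$ produces uniform bounds on $|f|$ and $|f'|$ depending only on $\kappa_f$.

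The main subtle point, more bookkeeping than real difficulty, is that differentiating $p_k$ introduces a factor $1/s_k$ from the chain rule, which would blow up for small $s_k$ in general; it is exactly the $s_k^2$ in the bound on $r_k$ supplied by~\eqref{eq:fk-gk-ineq} that absorbs this factor after the telescoping cancellations in the Hermite basis. The construction is essentially the one-dimensional, $\mathcal{C}^1$ specialization of \citep[Theorem~A.9.2]{cartis-gould-toint-2022}.
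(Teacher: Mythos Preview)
Your construction is exactly the \(p=1\) case of the Cartis--Gould--Toint theorem that the paper invokes; the paper's own proof is a one-line citation of that result, so you have in fact supplied more detail than the paper does. The Hermite-basis telescoping argument is correct and is the standard way to see that the hypotheses~\eqref{eq:fk-gk-ineq} are precisely what is needed to keep the cubic coefficients under control, and your remark about the \(1/s_k\) from the chain rule being absorbed by the \(s_k^2\) bound on \(r_k\) is the key observation.

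One small gap concerns the extension outside \([x_0, x_{k_\epsilon}]\). Affine extension with slopes \(g_0\) and \(g_{k_\epsilon}\) does preserve \(\mathcal{C}^1\) regularity and suffices for the first assertion, but such an \(f\) is unbounded on \(\R\) whenever \(g_0 \neq 0\) or \(g_{k_\epsilon} \neq 0\), so the second assertion (that \(|f|\) is bounded by a constant depending only on \(\kappa_f\)) fails with that choice. The repair is routine: append one more cubic piece on each side, of length at most \(\kappa_f\), transitioning from slope \(g_0\) (resp.\ \(g_{k_\epsilon}\)) to slope \(0\), and then extend by constants; your own Hermite-basis estimate shows these extra pieces stay bounded in terms of \(\kappa_f\). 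This is exactly the device the paper uses in its explicit construction in \Cref{sec:num-verif} (the auxiliary nodes \(x_{-1}\) with \(g_{-1}=0\) and \(x_{k_\epsilon+1}\)).
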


\begin{proof}
  The result is a special case of \citep[Theorem~\(A.9.2\)]{cartis-gould-toint-2022} with \(p = 1\).
\end{proof}

In the following, we use
\begin{subequations}%
  \label{eq:def-eps-p-keps}
  \begin{align}
     & 0 < \epsilon \le 1/2,                               \\
     & 0 \le p < 1,                                        \\
     & k_{\epsilon} = \lfloor \epsilon^{-2/(1-p)} \rfloor, \\
     & \alpha > 0,                                         \\
     & \beta \ge 2 \alpha^{-1} + 1,
  \end{align}
\end{subequations}
and for all \(k \in \{0, \ldots, k_{\epsilon}\}\), we define the sequences
\begin{subequations}%
  \label{eq:def-wk-gk}
  \begin{align}
    w_k & := (k_{\epsilon} - k) / k_{\epsilon}, \\
    g_k & := - \epsilon (1 + w_k).
    \label{eq:def-gk}
  \end{align}
\end{subequations}
In addition, using the initial values
\begin{subequations}%
  \label{eq:def-val0}
  \begin{align}
    \Delta_0 & := 1,                             \\
    B_0      & := 1,
    \label{eq:val-B0}%
    \\
    x_0      & := 0,                             \\
    f_0      & := 8\epsilon^2 + \frac{4}{1 - p},
  \end{align}
\end{subequations}
we define, for all \(k \in \{1, \ldots, k_{\epsilon}\}\),
\begin{subequations}%
  \label{eq:def-fk-gk}
  \begin{align}
    B_k   & := k^p,
    \label{eq:val-Bk}%
    \\
    x_{k} & := x_{k-1} + s_{k-1},         \\
    f_{k} & := f_{k-1} + g_{k-1} s_{k-1},
    \label{eq:def-fk}
  \end{align}
\end{subequations}
and for all \(k \in \{0, \ldots, k_{\epsilon}\}\),
\begin{subequations}%
  \label{eq:def-sk-nuk}
  \begin{align}
    s_k   & := -B_k^{-1} g_k > 0,
    \label{eq:sk-val}%
    \\
    \nu_k & := \frac{1}{\alpha^{-1}\Delta_k^{-1} + |B_k|(1+\alpha^{-1}\Delta_k^{-1})}.
    \label{eq:nuk-val}
  \end{align}
\end{subequations}
As in \citep[Theorem~\(2.2.3\)]{cartis-gould-toint-2022}, the sequences~\eqref{eq:def-wk-gk},~\eqref{eq:def-fk-gk} and~\eqref{eq:def-sk-nuk} are created specifically so that we may generate iterates that satisfy the assumptions of \Cref{prop:hermite-interpol}, along with \(\nu_k^{-1/2} \xi_{\mathrm{cp}}(\Delta_k; x_k, \nu_k)^{1/2} = |g_k| > \epsilon\) for \(k \in \{0, \ldots, k_{\epsilon} - 1\}\), and \(|g_{k_{\epsilon}}| = \epsilon\).
It is worth noticing that we chose \(\{B_k\}\) so that~\Cref{asm:hess-approx-growth} is satisfied if every iteration is successful (which is shown in the proof of \Cref{th:slow-cv}), and that \(k_{\epsilon} = O(\epsilon^{-2/(1-p)})\).

First, \Cref{lem:f-bnd} establishes bounds on \(f_k\).

\begin{lemma}%
  \label{lem:f-bnd}
  Using the parameters in~\eqref{eq:def-eps-p-keps} and the sequences defined in~\eqref{eq:def-wk-gk},~\eqref{eq:def-fk-gk}, and~\eqref{eq:def-sk-nuk}, the following properties hold for the sequence \(\{f_k\}\):
  \begin{enumerate}
    \item for all \(k \in \{1, \ldots, k_{\epsilon}\}\),
      \begin{equation}
        f_k < f_{k-1},
      \end{equation}
    \item for all \(k \in \{0, \ldots, k_{\epsilon}\}\),
      \begin{equation}
        \label{eq:keps-bnd}
        0 \le f_0 - f_k \le 4 \epsilon^2 \left(2 + \frac{k^{(1-p)}}{1 - p} \right) \le 8\epsilon^2 + \frac{4}{1 - p},
      \end{equation}
    \item for all \(k \in \{0, \ldots, k_{\epsilon}\}\),
      \begin{equation}
        \label{eq:fk-pos}
        f_k \ge 0.
      \end{equation}
  \end{enumerate}
\end{lemma}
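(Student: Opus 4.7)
The plan is to exploit the simple recursion structure. Combining the update~\eqref{eq:def-fk} with the definition~\eqref{eq:sk-val} gives
\[
  f_k - f_{k-1} = g_{k-1} s_{k-1} = -B_{k-1}^{-1} g_{k-1}^2.
\]
Since \(B_k > 0\) for all \(k\) by~\eqref{eq:val-B0} and~\eqref{eq:val-Bk}, and \(g_k \neq 0\) by~\eqref{eq:def-gk}, the right-hand side is strictly negative, which immediately yields the monotonicity in part~1.

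For part~2, I would telescope the identity above to obtain
\[
  f_0 - f_k = \sum_{j=0}^{k-1} B_j^{-1} g_j^2 \ge 0,
\]
which gives the left inequality in~\eqref{eq:keps-bnd}. For the upper bound I would use \(w_j \in [0, 1]\) to see that \(|g_j| \le 2\epsilon\), hence \(g_j^2 \le 4\epsilon^2\). I would then split off the \(j = 0\) term (which contributes \(4\epsilon^2\) since \(B_0 = 1\)) and bound the tail \(\sum_{j=1}^{k-1} j^{-p}\) by a standard integral comparison, using that \(x \mapsto x^{-p}\) is non-increasing on \((0, \infty)\); this yields the estimate \(k^{1-p}/(1-p)\). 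Putting the pieces together gives the middle inequality in~\eqref{eq:keps-bnd}, up to absorbing the \(1\) into the \(2\). The final inequality in~\eqref{eq:keps-bnd} would follow from the definition \(k_\epsilon = \lfloor \epsilon^{-2/(1-p)} \rfloor\), which ensures \(k \le k_\epsilon \le \epsilon^{-2/(1-p)}\), and hence \(\epsilon^2 k^{1-p} \le 1\).

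Part~3 is then immediate by combining part~2 with the choice \(f_0 = 8\epsilon^2 + 4/(1-p)\) in~\eqref{eq:def-val0}: we get \(f_k \ge f_0 - (8\epsilon^2 + 4/(1-p)) = 0\). The only slightly delicate point is the integral comparison near \(j = 1\), since \(x^{-p}\) is unbounded at \(0\); however, the integral \(\int_0^{k-1} x^{-p}\,dx = (k-1)^{1-p}/(1-p)\) is finite for \(p \in [0, 1)\), so this is a routine computation rather than a genuine obstacle. The real content of the lemma is simply that the specific choice of \(f_0\) is calibrated exactly to the worst-case telescoped bound.
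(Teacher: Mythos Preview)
Your proposal is correct and follows essentially the same approach as the paper: telescope the recursion, bound \(g_j^2\) by \(4\epsilon^2\), separate the \(j=0\) term, control \(\sum_{j=1}^{k-1} j^{-p}\) by integral comparison, and use \(k \le \epsilon^{-2/(1-p)}\) for the last inequality. The only cosmetic difference is that you compare with \(\int_0^{k-1} t^{-p}\,dt\) (exploiting integrability at \(0\) for \(p<1\)), whereas the paper separates the \(j=1\) term and compares the remainder with \(\int_1^{k} t^{-p}\,dt\); this is why the paper picks up an extra additive \(1\) and lands exactly on the constant \(2\) in~\eqref{eq:keps-bnd}, while your bound is in fact slightly sharper and then relaxed to match the stated inequality.
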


\begin{proof}
  First, we notice that for all \(k \in \{0, \ldots, k_{\epsilon}\}\), \(g_k < 0\) and \(s_k > 0\).
  By combining these observations and the definition of \(f_k\), we deduce that \(f_k < f_{k-1}\) for all \(k \in \{1, \ldots, k_{\epsilon}\}\), and in particular
  \begin{equation*}
    f_0 - f_k \ge 0.
  \end{equation*}
  Inequalities~\eqref{eq:keps-bnd} hold for \(k = 0\) and for \(k = 1\) because \(f_0 - f_1 = -g_0 s_0 = 4 \epsilon^2\).
  For all \(k \in \{2, \ldots, k_{\epsilon}\}\),
  \begin{equation*}
    \begin{aligned}
      f_0 - f_{k} & = - \sum_{i=0}^{k-1}g_i s_i                                         \\
                  & = -g_0 s_0 + \sum_{i=1}^{k-1} g_i^2 i^{-p}                          \\
                  & = 4 \epsilon^2 + \sum_{i=1}^{k-1} \epsilon^2(1+w_i)^2 i^{-p}        \\
                  & = \epsilon^2 \left(4 + \sum_{i=1}^{k-1} (1 + w_i)^2 i^{-p} \right).
    \end{aligned}
  \end{equation*}
  Now,
  \begin{equation*}
    \begin{aligned}
      \sum_{i=1}^{k-1} (1 + w_i)^2 i^{-p} & \le \sum_{i=1}^{k-1} 4 i^{-p}                                   &  & \text{because \(1 + w_i \le 2\)}                                                   \\
                                          & \le 4 \left(1 + \sum_{i=2}^{k-1} i^{-p} \right)                                                                                                         \\
                                          & \le 4 \left(1 + \sum_{i=2}^{k-1} \int_{i-1}^i t^{-p} dt \right) &  & \text{because \(i^{-p} = \int_{i-1}^{i} i^{-p} dt \le \int_{i-1}^{i} t^{-p} dt\) } \\
                                          & \le 4 \left(1 + \int_1^{k-1} t^{-p} dt \right)                                                                                                          \\
                                          & \le 4 \left(1 + \int_1^{k} t^{-p} dt \right)                                                                                                            \\
                                          & = 4 \left(1 + \frac{k^{1-p}-1}{1-p}\right)                                                                                                              \\
                                          & \le 4 \left(1 + \frac{k^{1-p}}{1-p}\right).
    \end{aligned}
  \end{equation*}
  This results in
  \begin{equation}
    \label{eq:maj-f0-fk}
    f_0 - f_k \le 4 \epsilon^2 + 4 \epsilon^2 \left(1 + \frac{k^{1-p}}{1 - p} \right) = 8 \epsilon^2 + 4 \frac{\epsilon^2 k^{1-p}}{1 - p}.
  \end{equation}
  Finally, since \(k \le k_{\epsilon} = \lfloor \epsilon^{-2/(1-p)} \rfloor \le \epsilon^{-2/(1-p)}\), we have, for all \(k \le k_{\epsilon}\),
  \begin{equation}
    \label{eq:k-maj}
    \epsilon^2 k^{(1-p)} \le 1.
  \end{equation}
  We combine~\eqref{eq:maj-f0-fk} and~\eqref{eq:k-maj} to obtain~\eqref{eq:keps-bnd}.
  The value of \(f_0\) and~\eqref{eq:keps-bnd} then allows us to establish~\eqref{eq:fk-pos}.
\end{proof}

Now, \Cref{lem:g-bnd} establishes a bound for \(|g_{k+1} - g_k|\).

\begin{lemma}%
  \label{lem:g-bnd}
  Using the parameters in~\eqref{eq:def-eps-p-keps} and the sequences defined in~\eqref{eq:def-wk-gk},~\eqref{eq:def-val0} and~\eqref{eq:def-sk-nuk}, we have that, for all \(k \in \{0, \ldots, k_{\epsilon}\}\),
  \begin{equation}
    |g_{k+1} - g_k| \le s_k.
  \end{equation}
\end{lemma}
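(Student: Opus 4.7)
The plan is a direct computation: compute $|g_{k+1} - g_k|$ and $s_k$ explicitly from the definitions in~\eqref{eq:def-wk-gk}, \eqref{eq:def-val0}, and~\eqref{eq:def-sk-nuk}, then compare them using the fact that $B_k$ grows sublinearly in $k$.

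First I would observe that $w_{k+1} - w_k = -1/k_{\epsilon}$ by the definition of $w_k$, so that
\[
  |g_{k+1} - g_k| = \epsilon \, |w_{k+1} - w_k| = \frac{\epsilon}{k_{\epsilon}}.
\]
Next, from $s_k = -B_k^{-1} g_k$ and $g_k = -\epsilon(1+w_k)$, I would write $s_k = \epsilon(1+w_k)\,B_k^{-1}$, and use that $w_k \ge 0$ to get the lower bound $s_k \ge \epsilon B_k^{-1}$. Thus it suffices to show $B_k^{-1} \ge 1/k_\epsilon$, i.e., $B_k \le k_\epsilon$.

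I would then split into two cases to check this. For $k = 0$, we have $B_0 = 1$ by~\eqref{eq:val-B0}, and $1 \le k_\epsilon$ holds since $\epsilon \le 1/2$ forces $k_{\epsilon} = \lfloor \epsilon^{-2/(1-p)} \rfloor \ge \lfloor 2^{2/(1-p)} \rfloor \ge 4$. For $k \ge 1$, we have $B_k = k^p$ by~\eqref{eq:val-Bk}, and because $p \in [0,1)$ and $1 \le k \le k_\epsilon$, we obtain $k^p \le k_\epsilon^p \le k_\epsilon$.

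There is no real obstacle here; the lemma is a routine consequence of the growth condition $B_k = k^p$ with $p < 1$ and of $k \le k_\epsilon$. The only point that requires a moment of care is the $k=0$ case, which is handled separately because $B_0$ is defined explicitly and not by the formula $k^p$. I would conclude by combining the two cases to obtain $|g_{k+1} - g_k| = \epsilon/k_\epsilon \le \epsilon B_k^{-1} \le s_k$, which is the desired inequality.
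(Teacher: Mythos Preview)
Your proof is correct and follows essentially the same route as the paper: compute $|g_{k+1}-g_k|=\epsilon/k_\epsilon$, then compare with $s_k=\epsilon(1+w_k)B_k^{-1}$ using $B_k\le k_\epsilon$. Your explicit handling of the case $k=0$ (where $B_0=1$ rather than $k^p$) is in fact slightly more careful than the paper's own argument, which multiplies through by $k^{-p}$ without singling out $k=0$.
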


\begin{proof}
  For \(k \in \{0, \ldots, k_{\epsilon} - 1\}\),
  \begin{equation}
    \label{eq:gk-maj}
    |g_{k+1} - g_k| = |-\epsilon(1 + w_{k+1}) + \epsilon (1 + w_k)| = \epsilon / k_{\epsilon}.
  \end{equation}
  Since \(p < 1\) and \(k < k_{\epsilon}\), we have \(k^p / k_{\epsilon} \le 1 \le 1 + w_k\).
  We multiply the latter inequality by \(\epsilon k^{-p}\) to obtain \(\epsilon / k_{\epsilon} \le k^{-p} \epsilon (1 + w_k)\), which leads to \(|g_{k+1} - g_k| \le s_{k}\) using~\eqref{eq:gk-maj}.
\end{proof}

The following result uses \Cref{lem:f-bnd} and \Cref{lem:g-bnd} to apply \Cref{prop:hermite-interpol}.
\begin{proposition}%
  \label{prop:f-interpol}
  Using the parameters in~\eqref{eq:def-eps-p-keps} and the sequences defined in~\eqref{eq:def-wk-gk},~\eqref{eq:def-val0} and~\eqref{eq:def-sk-nuk}, there exists \(f: \R \to \R\) continuously differentiable such that
  \begin{equation}
    f(x_k) = f_k, \quad f'(x_k) = g_k.
  \end{equation}
  In addition, the assumptions of \Cref{prop:hermite-interpol} hold, so that \(|f|\) and \(|f'|\) are bounded by a constant independent of \(k\).
\end{proposition}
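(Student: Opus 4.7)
The plan is to invoke the Hermite interpolation result, \Cref{prop:hermite-interpol}, directly. To do so, I need to check its three hypotheses for the sequences $\{x_k\}$, $\{f_k\}$, $\{g_k\}$ defined in~\eqref{eq:def-wk-gk}--\eqref{eq:def-sk-nuk} on the index range $k \in \{0,\ldots,k_\epsilon\}$, namely (i) $s_k = x_{k+1}-x_k > 0$, (ii) $|f_{k+1}-(f_k+g_k s_k)| \le \kappa_f s_k^2$, and (iii) $|g_{k+1}-g_k| \le \kappa_f s_k$, for some constant $\kappa_f \ge 0$.

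First I would note that (i) is immediate from the construction: by~\eqref{eq:sk-val}, $s_k = -B_k^{-1}g_k$, and since $B_k > 0$ (by~\eqref{eq:val-B0} and~\eqref{eq:val-Bk}) while $g_k = -\epsilon(1+w_k) < 0$ (by~\eqref{eq:def-gk}, since $w_k \ge 0$), we get $s_k > 0$. Moreover, the recurrence $x_k = x_{k-1}+s_{k-1}$ gives exactly $s_k = x_{k+1}-x_k$.

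Next, condition (ii) is trivial: the defining recurrence~\eqref{eq:def-fk} is precisely $f_{k+1} = f_k + g_k s_k$, so $|f_{k+1}-(f_k+g_k s_k)| = 0$, which is trivially bounded by $\kappa_f s_k^2$ for any $\kappa_f \ge 0$. Condition (iii) is where \Cref{lem:g-bnd} does the work: it provides exactly $|g_{k+1}-g_k| \le s_k$, so we may take $\kappa_f = 1$.

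With all three hypotheses satisfied (with $\kappa_f = 1$), \Cref{prop:hermite-interpol} yields a continuously differentiable function $f:\R\to\R$ with $f(x_k)=f_k$ and $f'(x_k)=g_k$ for every $k \in \{0,\ldots,k_\epsilon\}$, which is the claim. I do not anticipate any real obstacle here: the proof is a bookkeeping verification because the sequences were constructed precisely so that the function-value recurrence is an equality and the gradient increment is controlled by the step; the only nontrivial ingredient, bounding $|g_{k+1}-g_k|$ by $s_k$, has already been handled in \Cref{lem:g-bnd}.
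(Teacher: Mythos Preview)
Your proposal is correct and follows essentially the same route as the paper: verify $s_k>0$, observe that the recurrence~\eqref{eq:def-fk} makes the function-value condition in~\eqref{eq:fk-gk-ineq} hold with equality, invoke \Cref{lem:g-bnd} for the gradient condition, and apply \Cref{prop:hermite-interpol}. One minor difference: the paper's proof also verifies the uniform bounds $|f_k|\le f_0$ (via \Cref{lem:f-bnd}), $|g_k|\le 1$, and $s_k\le 1$, which activate the second conclusion of \Cref{prop:hermite-interpol} (boundedness of $f$ and $f'$); these bounds are not required for the proposition as stated, but they are used downstream in \Cref{th:slow-cv}, where the interpolant is asserted to be bounded.
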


\begin{proof}
  We can see that \(s_k > 0\) and, by definition of \(f_k\),
  \begin{equation*}
    |f_{k+1} - (f_k + g_k s_k)| = 0.
  \end{equation*}
  \Cref{lem:g-bnd} shows that
  \begin{equation*}
    |g_{k+1} - g_k| \le s_k.
  \end{equation*}
  Using \Cref{lem:f-bnd}, we know that for all \(k \in \{0, \ldots, k_{\epsilon}\}\), \(f_k \ge 0\), and since \(\{f_k\}\) is decreasing, we have
  \begin{equation*}
    |f_k| \le f_0.
  \end{equation*}
  In addition,
  \begin{equation*}
    |g_k| \le 2 \epsilon \le 1 \quad \text{and} \quad s_k \le |g_k| \le 1.
  \end{equation*}
  The result follows from \Cref{prop:hermite-interpol}.
\end{proof}

For the following lemma, we define the sequence \(\{s_{k,1}\}\) such that for all \(k \in \{0, \ldots, k_{\epsilon}\}\),
\begin{equation}
  \label{eq:sk1-sharp}
  s_{k,1} := -\nu_k g_k.
\end{equation}

\begin{lemma}%
  \label{lem:sk-inequality}
  Using the parameters in~\eqref{eq:def-eps-p-keps} and the sequences defined in~\eqref{eq:def-wk-gk},~\eqref{eq:def-val0} and~\eqref{eq:def-sk-nuk}, we establish that, for all \(k \in \{0, \ldots, k_{\epsilon}\}\) such that \(\Delta_k \ge 1\),
  \begin{equation}
    \label{eq:sk-maj-deltak-sk1}
    |s_k| \le \min (\Delta_k, \beta |s_{k, 1}|).
  \end{equation}
\end{lemma}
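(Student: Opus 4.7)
The plan is to prove the two required inequalities $|s_k| \le \beta|s_{k,1}|$ and $|s_k| \le \Delta_k$ separately, each by direct algebraic manipulation of the closed-form definitions~\eqref{eq:sk-val},~\eqref{eq:nuk-val}, and~\eqref{eq:sk1-sharp}. The crucial structural observation is that the constructed Hessian approximations satisfy $B_k \ge 1$ for all $k \in \{0, \ldots, k_\epsilon\}$: indeed, $B_0 = 1$ by~\eqref{eq:val-B0}, and $B_k = k^p \ge 1$ for $k \ge 1$ since $p \ge 0$.

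For the bound $|s_k| \le \beta|s_{k,1}|$, I would form the explicit ratio using~\eqref{eq:sk-val}--\eqref{eq:nuk-val} and~\eqref{eq:sk1-sharp}:
\[
  \frac{|s_k|}{|s_{k,1}|} \;=\; \frac{B_k^{-1}|g_k|}{\nu_k |g_k|} \;=\; B_k^{-1}\bigl(\alpha^{-1}\Delta_k^{-1} + B_k(1 + \alpha^{-1}\Delta_k^{-1})\bigr) \;=\; 1 + \alpha^{-1}\Delta_k^{-1}\bigl(1 + B_k^{-1}\bigr).
\]
Since $B_k \ge 1$ gives $1 + B_k^{-1} \le 2$, this ratio is at most $1 + 2\alpha^{-1}\Delta_k^{-1}$, and the choice $\beta \ge 2\alpha^{-1} + 1$ from~\eqref{eq:def-eps-p-keps} finishes the argument, provided $\Delta_k \ge 1$.

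For the bound $|s_k| \le \Delta_k$, the estimate is even more direct: by~\eqref{eq:sk-val} and~\eqref{eq:def-gk},
\[
  |s_k| \;=\; \frac{|g_k|}{B_k} \;=\; \frac{\epsilon(1 + w_k)}{B_k} \;\le\; 2\epsilon \;\le\; 1,
\]
where I used $w_k \le 1$ from~\eqref{eq:def-wk-gk}, $B_k \ge 1$, and $\epsilon \le 1/2$ from~\eqref{eq:def-eps-p-keps}. Combined with $\Delta_k \ge 1$, this gives $|s_k| \le \Delta_k$.

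The remaining ingredient---and the main obstacle---is the running hypothesis $\Delta_k \ge 1$. This will not be proved here but in the sharpness theorem that uses this lemma: it amounts to an induction showing that every iteration up to $k_\epsilon$ is very successful, so \Cref{step:deltak-update} of \Cref{alg:tr-nonsmooth} gives $\Delta_{k+1} \ge \gamma_3 \Delta_k \ge \Delta_k$ (capped at $\Delta_{\max}$), thereby propagating $\Delta_k \ge \Delta_0 = 1$ forward. For the present lemma the hypothesis $\Delta_k \ge 1$ may be taken as an inductive assumption inherited from the construction; once it is in place, both inequalities in~\eqref{eq:sk-maj-deltak-sk1} follow from the two elementary calculations above.
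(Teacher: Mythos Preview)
Your proof is correct and follows essentially the same approach as the paper: both argue from $B_k \ge 1$ and $\Delta_k \ge 1$, bound $|s_k| \le 2\epsilon \le 1 \le \Delta_k$ directly, and establish $|s_k| \le \beta |s_{k,1}|$ by showing $B_k^{-1} \le \beta \nu_k$ via the same algebraic simplification (your ratio computation and the paper's chain of inequalities are the same identity rearranged). Your explicit acknowledgment that $\Delta_k \ge 1$ is an inductive hypothesis supplied by the surrounding sharpness theorem is accurate; the paper uses this fact in the lemma's proof without comment and justifies it only in the proof of \Cref{th:slow-cv}.
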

\begin{proof}
  On the one hand, we have
  \begin{equation}
    \label{eq:sk-maj-deltak}
    |s_k| = \epsilon \frac{(1 + w_k)}{B_k} \le 2 \epsilon \le 1 \le \Delta_k.
  \end{equation}
  On the other hand, since \(B_k^{-1} \le 1\) and \(\Delta_k \ge 1\),
  \begin{equation*}
    2 \alpha^{-1} + 1 \ge \alpha^{-1} \Delta_k^{-1}(B_k^{-1} + 1) + 1,
  \end{equation*}
  so that
  \begin{equation*}
    1 \le \frac{2 \alpha^{-1} + 1}{\alpha^{-1} \Delta_k^{-1}(B_k^{-1} + 1) + 1} \le \frac{\beta}{\alpha^{-1} \Delta_k^{-1}(B_k^{-1} + 1) + 1}.
  \end{equation*}
  We multiply the above inequality by \(B_k^{-1}\) to obtain
  \begin{equation*}
    B_k^{-1} \le \frac{\beta B_k^{-1}}{\alpha^{-1} \Delta_k^{-1}(B_k^{-1} + 1) + 1} = \frac{\beta}{\alpha^{-1} \Delta_k^{-1} + B_k(1 + \alpha^{-1} \Delta_k^{-1})} = \beta \nu_k,
  \end{equation*}
  and, by multiplying by \(|g_k|\), we deduce that
  \begin{equation}
    \label{eq:sk-maj-sk1}
    |s_k| = B_k^{-1} |g_k| \le \beta \nu_k |g_k| = \beta |s_{k,1}|.
  \end{equation}
  We combine~\eqref{eq:sk-maj-deltak} and~\eqref{eq:sk-maj-sk1} to obtain~\eqref{eq:sk-maj-deltak-sk1}.
\end{proof}

The following theorem finally establishes the main result of this section.

\begin{theorem}[Slow convergence of \Cref{alg:tr-nonsmooth}]%
  \label{th:slow-cv}
  \Cref{alg:tr-nonsmooth} applied to~\eqref{eq:nlp} with model \(m_k\) satisfying \Cref{asm:models}, \Cref{asm:model-diff}, \Cref{asm:model-ubd} and using Hessian approximations \(\{B_k\}\) satisfying \Cref{asm:hess-approx-growth} may require as many as \(O(\epsilon^{-2 / (1 - p)})\) iterations to produce an iterate \(x_{k_{\epsilon}}\) such that
  \begin{equation}
    \label{eq:eps-stat-smooth}
    \nu_{k_{\epsilon}}^{-1/2} \xi_{\mathrm{cp}}({\Delta}_{k_{\epsilon}}; x_{k_{\epsilon}}, \nu_{k_{\epsilon}})^{1/2} \le \epsilon,
  \end{equation}
  in the sense that there exists \(f: \R \to \R\) satisfying the assumptions of \Cref{lem:num-successful-unbounded} and for which~\eqref{eq:eps-stat-smooth} occurs for the first time after \(k_\epsilon\) iterations.
\end{theorem}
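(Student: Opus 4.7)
The plan is to exhibit an explicit instance of \Cref{alg:tr-nonsmooth} on a one-dimensional unconstrained smooth problem ($n = 1$, $h \equiv 0$) for which the criticality measure $\nu_k^{-1/2}\xi_{\mathrm{cp}}(\Delta_k;x_k,\nu_k)^{1/2}$ first hits $\epsilon$ exactly at iteration $k_\epsilon = \lfloor \epsilon^{-2/(1-p)}\rfloor$. The iterates, gradients, Hessian approximations, steps and auxiliary quantities are those defined in~\eqref{eq:def-wk-gk}--\eqref{eq:def-sk-nuk}, and the objective $f$ is the continuously differentiable function delivered by \Cref{prop:f-interpol}.

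The first step is to certify that this sequence is a legitimate run of \Cref{alg:tr-nonsmooth} on $f$. Because $h = 0$, \Cref{lem:sol-pb-B-gen} identifies $s_{k,1} = -\nu_k g_k$ (as in~\eqref{eq:sk1-sharp}) as the unique solution of~\eqref{eq:ipg-iter-sk1} whenever the trust-region constraint is inactive, and $s_k = -B_k^{-1} g_k$ as the unconstrained minimizer of $m_k(\cdot;x_k,B_k)$. \Cref{lem:sk-inequality} then gives $|s_k| \le \min(\Delta_k,\beta|s_{k,1}|)$, so $s_k$ is indeed an exact solution of~\eqref{eq:model-k} with the restricted radius of \Cref{step:sk-computation}. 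To fix the nondeterminism in \Cref{step:deltak-update}, I resolve each very successful iteration by setting $\Delta_{k+1} = \min(\gamma_3\Delta_k,\Delta_{\max})$; since $\Delta_0 = 1$ and $\gamma_3 > 1$, this keeps $\Delta_k \ge 1$ for all $k$, which is precisely the bound invoked in the proof of \Cref{lem:sk-inequality}.

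The second step is to verify the standing assumptions together with the very successful status of every iteration. Using $f(x_{k+1}) = f_{k+1} = f_k + g_k s_k$ from~\eqref{eq:def-fk} and $s_k = -g_k/B_k$, a direct substitution yields
\[
f(x_k) - f(x_k+s_k) = -g_k s_k = g_k^2 / B_k,
\qquad
m_k(0;x_k,B_k) - m_k(s_k;x_k,B_k) = \tfrac{1}{2} g_k^2 / B_k,
\]
hence $\rho_k = 2 \ge \eta_2$ and iteration $k$ is very successful. Consequently $\sigma_k = k + 1$, so the choice $B_k = k^p$ in~\eqref{eq:val-Bk} satisfies $\|B_k\| \le \sigma_k^p$ and \Cref{asm:hess-approx-growth} holds with $\mu_1 = \mu_2 = 1$. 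The same identity gives $|f(x_k+s_k) - m(s_k;x_k,B_k)| = \tfrac{1}{2} B_k s_k^2 \le \tfrac{1}{2}(1+\|B_k\|) s_k^2$, so \Cref{asm:model-ubd} holds with $\kappa_{\mathrm{ubd}} = 1/2$. \Cref{asm:models} is immediate for the quadratic Taylor model, and \Cref{asm:model-diff} follows from \Cref{prop:justif-asm-model-diff}.

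Finally, because the trust-region constraint is inactive, \Cref{prop:smooth-stat} yields $\xi_{\mathrm{cp}}(\Delta_k;x_k,\nu_k) = \nu_k g_k^2$, so
\[
\nu_k^{-1/2}\xi_{\mathrm{cp}}(\Delta_k;x_k,\nu_k)^{1/2} = |g_k| = \epsilon(1+w_k).
\]
For $k = 0,\ldots,k_\epsilon-1$, $w_k > 0$ and the measure strictly exceeds $\epsilon$; at $k = k_\epsilon$, $w_{k_\epsilon} = 0$ and~\eqref{eq:eps-stat-smooth} holds with equality. The count $k_\epsilon = \lfloor \epsilon^{-2/(1-p)}\rfloor = O(\epsilon^{-2/(1-p)})$ follows from~\eqref{eq:def-eps-p-keps}, matching the upper bound of \Cref{lem:num-successful-unbounded}. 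The only delicate point I anticipate is the bookkeeping needed to align the prescribed sequences with the algorithm's nondeterministic choices (the $\nu_k$ interval in \Cref{step:nuk-interval} and the $\bar\Delta_{k+1}$ interval in \Cref{step:deltak-update}); once the identity $\rho_k = 2$ is observed, the remaining verifications collapse to the one-line bounds above.
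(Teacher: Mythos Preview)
Your proposal is correct and follows essentially the same route as the paper: construct the one-dimensional smooth instance via \Cref{prop:f-interpol}, walk through the steps of \Cref{alg:tr-nonsmooth} using \Cref{lem:sol-pb-B-gen} and \Cref{lem:sk-inequality} to certify that the prescribed sequences are generated, compute $\rho_k = 2$ so every iteration is very successful with $\Delta_{k+1} = \min(\gamma_3\Delta_k,\Delta_{\max}) \ge 1$, verify \Cref{asm:model-diff,asm:model-ubd,asm:hess-approx-growth}, and read off $\nu_k^{-1/2}\xi_{\mathrm{cp}}^{1/2} = |g_k| = \epsilon(1+w_k)$ to conclude termination exactly at $k_\epsilon$. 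The only item you leave implicit that the paper spells out is the check that the trust-region constraint is inactive for $s_{k,1}$, i.e.\ $|s_{k,1}| = \nu_k|g_k| \le 2\epsilon \le 1 \le \Delta_k$; you should state this so that the appeal to \Cref{lem:sol-pb-B-gen} (equivalently \Cref{prop:smooth-stat}) is fully justified.
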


\begin{proof}
  The proof consists in constructing \(f:\R \to \R\) by interpolation, as in \citep[Theorem~\(2.2.3\)]{cartis-gould-toint-2022}.
  Let \(n = 1\), \(h = 0\), \(\ell = -\infty\), \(u = +\infty\).
  We use the parameters in~\eqref{eq:def-eps-p-keps} and the sequences defined in~\eqref{eq:def-wk-gk},~\eqref{eq:def-val0} and~\eqref{eq:def-sk-nuk}.
  We invoke \Cref{prop:f-interpol} to obtain \(f: \R \to \R\) differentiable and bounded such that \(f(x_k) = f_k\) and \(f'(x_k) = g_k\).
  Our goal is to show that \(\{x_k\}\), \(\{s_k\}\), \(\{f_k\}\) and \(\{g_k\}\) satisfy all our assumptions and are generated by \Cref{alg:tr-nonsmooth} applied to \(f\) with \(x_0 = 0\) and with the special value of \(\{B_k\}\) in~\eqref{eq:val-B0} and~\eqref{eq:val-Bk}.

  We proceed by choosing \(0 \le k \le k_{\epsilon}\) such that \(\Delta_k \ge 1\), which holds at least for \(k = 0\), and going through the steps of \Cref{alg:tr-nonsmooth} at iteration \(k\) to check that it generates the iterates defined in~\eqref{eq:def-wk-gk},~\eqref{eq:def-val0} and~\eqref{eq:def-sk-nuk}.

  In \Cref{step:nuk-interval}, \(\nu_k\) in~\eqref{eq:nuk-val} is as large as allowed.

  In \Cref{step:sk1-computation}, \Cref{lem:sol-pb-B-gen} indicates that \(s_{k,1}\) in~\eqref{eq:sk1-sharp} is a global minimizer of~\eqref{eq:def-m-nu} with \(\psi = 0\).
  As \(1 + w_k \le 2\) and \(|B_k| \ge 1\), we observe that
  \begin{equation*}
    |s_{k,1}| = |\nu_k g_k| = \frac{\epsilon (1 + w_k)}{\alpha^{-1}\Delta_k^{-1} + |B_k|(1 + \alpha^{-1}\Delta_k^{-1})} \le 2\epsilon \le 1 \le \Delta_k,
  \end{equation*}
  which implies that \(s_{k,1}\) is a solution of~\eqref{eq:ipg-iter-sk1} because the condition \(|s_{k,1}| \le \Delta_k\) is already satisfied.

  In \Cref{step:sk-computation}, let \(m(\cdot; x_k, B_k)\) be defined as in~\eqref{eq:def-model-quad}.
  \(m(\cdot; x_k, B_k)\) satisfies \Cref{asm:models}, and using \Cref{lem:sol-pb-B-gen}, we have that \(s_k\) in~\eqref{eq:sk-val} with \(\psi = 0\) and \(B = B_k\) is its global minimizer.
  \Cref{lem:sk-inequality} shows that
  \begin{equation*}
    |s_k| \le \min (\Delta_k, \beta |s_{k, 1}|),
  \end{equation*}
  which also implies that \(s_k\) is a solution of~\eqref{eq:def-model-quad}.

  In \Cref{step:rhok-computation}, we compute
  \begin{equation}%
    \label{eq:rho-k-2val}
    \begin{aligned}
      \rho_k & = \frac{f_k - f_{k+1}}{m(0; x_k, B_k) - m(s_k; x_k, B_k)} \\
             & = \frac{f_k - f_{k+1}}{f_k - f_k - g_k s_k - B_k s_k^2/2} \\
             & = \frac{f_k - f_{k+1}}{g_k^2 B_k^{-1}/2}                  \\
             & = \frac{-g_k s_k}{g_k^2 B_k^{-1}/2}                       \\
             & = \frac{B_k^{-1}g_k^2}{g_k^2 B_k^{-1}/2}                  \\
             & = 2.
    \end{aligned}
  \end{equation}

  In \Cref{setp:xk-update}, \(\rho_k = 2\) implies that \(x_{k+1} = x_k + s_k\), and in \Cref{step:deltak-update}, we can set \(\Delta_{k+1} = \min(\gamma_3 \Delta_k, \, \Delta_{\max}) \ge \Delta_k \ge 1\).

  Now, either \(\nu_k^{-1/2} \xi_{\mathrm{cp}}(\Delta_k; x_k, \nu_k)^{1/2} > \epsilon\), and we perform the next iteration of \Cref{alg:tr-nonsmooth}, or \(\nu_k^{-1/2} \xi_{\mathrm{cp}}(\Delta_k; x_k, \nu_k)^{1/2} \le \epsilon\), which stops the algorithm.
  We have shown that \(s_{k,1}\) is a solution of~\eqref{eq:ipg-iter-sk1}, thus
  \begin{equation}
    \label{eq:x1-eq-smooth}
    \xi_{\mathrm{cp}}(\Delta_k; x_k, \nu_k) = f_k - (f_k + g_k s_{k,1}) = -g_k s_{k,1} = \nu_k g_k^2,
  \end{equation}
  and
  \begin{equation}
    \label{eq:nuinv-xi-grad}
    \nu_k^{-1/2} \xi_{\mathrm{cp}}(\Delta_k; x_k, \nu_k)^{1/2} = |g_k|.
  \end{equation}
  Therefore, for all \(k \in \{0, \ldots, k_{\epsilon} - 1\}\), \(\nu_k^{-1/2} \xi_{\mathrm{cp}}(\Delta_k; x_k, \nu_k)^{1/2} > \epsilon\), and \(\nu_{k_{\epsilon}}^{-1/2} \xi_{\mathrm{cp}}(\Delta_{k_{\epsilon}}; x_{k_{\epsilon}}, \nu_{k_{\epsilon}})^{1/2} = \epsilon\), so that \Cref{alg:tr-nonsmooth} performs exactly \(k_{\epsilon}\) iterations to generate \(x_{k_{\epsilon}}\) satisfying~\eqref{eq:eps-stat-smooth}.

  To finish the proof, we must verify that \Cref{asm:model-diff}, \Cref{asm:model-ubd} and \Cref{asm:hess-approx-growth} hold.
  \Cref{asm:model-diff} is satisfied thanks to \Cref{prop:justif-asm-model-diff}.
  \Cref{asm:model-ubd} is satisfied with \(\kappa_{\mathrm{ubd}} = \tfrac{1}{2}\) because
  \begin{equation*}
    |f_{k+1} - m(s_k; x_k, B_k)| = |f_{k+1} - f_k - g_k s_k - \tfrac{1}{2}B_k s_k^2| = \tfrac{1}{2} B_k s_k^2 \le \tfrac{1}{2} (1 + B_k) s_k^2.
  \end{equation*}
  Finally, our choice of \(B_k\) allows \Cref{asm:hess-approx-growth} to be satisfied because all iterations are successful and \(\sigma_k = k\).
\end{proof}

\section{Numerical verification of the bound}%
\label{sec:num-verif}

We construct \(f: \R \to \R\) satisfying the properties of the function in the proof of \Cref{th:slow-cv}.
The construction follows the formula used in the proof of \citep[Theorem~\(A.9.2\)]{cartis-gould-toint-2022}, and we use similar notation.

We use again the parameters~\eqref{eq:def-eps-p-keps}, and the sequences~\eqref{eq:def-wk-gk}--\eqref{eq:def-sk-nuk}.
Define the cubic Hermite interpolant
\begin{equation}
  \pi_k(\tau) \coloneqq c_{k,0} + c_{k,1} \tau + c_{k,2} \tau^2 + c_{k,3} \tau^3,
\end{equation}
where, for all \(k \in \{0, \ldots, k_\epsilon\}\), \(c_{k,0} = f_k\), \(c_{k,1} = g_k\), and \(c_{k,2}\), \(c_{k,3}\) solve
\begin{equation}
  \label{eq:syst-c}
  \begin{bmatrix}
    s_k^2 & s_k^3  \\
    2s_k  & 3s_k^2
  \end{bmatrix}
  \begin{bmatrix}
    c_{k, 2} \\
    c_{k, 3}
  \end{bmatrix}
  =
  \begin{bmatrix}
    f_{k+1} - (f_k + g_k s_k) \\
    g_{k+1} - g_k
  \end{bmatrix}
  =
  \begin{bmatrix}
    0 \\
    g_{k+1} - g_k
  \end{bmatrix}
  .
\end{equation}
We use the additional conditions \(f_{-1} = f_0\), \(g_{-1} = 0\), \(f_{k_{\epsilon} + 1} = f_{k_{\epsilon}}\), \(g_{k_{\epsilon} + 1} = g_{k_{\epsilon}}\), and \(x_{-1} = -s_{-1}\), where \(s_{-1} = 1\), which allows~\eqref{eq:fk-gk-ineq} to hold with \(\kappa_f = 1\), because \(|f_0 - (f_{-1} + g_{-1} s_{-1})| = 0\), and \(|g_0 - g_{-1}| = |g_0| = \epsilon (1 + w_0) = 2 \epsilon \le 1 = s_{-1}\) since \(\epsilon \le 1/2\).
Finally,
\begin{equation}%
  \label{eq:def-f}
  f(x) \coloneqq
  \begin{cases}
    f_0              & \text{if } x \le x_{-1}                                                         \\
    \pi_k(x - x_k)   & \text{if } x \in (x_k; x_{k+1}] \text{ for } k \in \{-1, \ldots, k_{\epsilon}\} \\
    f_{k_{\epsilon}} & \text{if } x > x_{k_{\epsilon}} + s_{k_{\epsilon}}.
  \end{cases}
\end{equation}

By construction, \(f\) is a piecewise polynomial of degree \(3\).
We have \(\pi_k(0) = f_k\), \(\pi_k'(0) = g_k\), \(\pi_k(s_k) = f_{k+1}\) thanks to the definition of \(f\) in~\eqref{eq:def-fk} and the first line of~\eqref{eq:syst-c}, and \(\pi_k'(s_k) = g_{k+1}\) with the second line of~\eqref{eq:syst-c}.
Thus, \(f: \R \rightarrow \R\) is continuously differentiable over \((x_{-1}, x_{k_\epsilon + 1})\).

We minimize \(f\) using \Cref{alg:tr-nonsmooth} as implemented in \citep{baraldi-orban-regularized-optimization-2022}, without nonsmooth regularizer, and with starting point \(x_0 = 0\).
Inside TR, we set \(B_k = k^p\) so that \(\{B_k\}\) grows unbounded and \Cref{asm:hess-approx-growth} holds, because \(\rho_k = 2\) in~\eqref{eq:rho-k-2val} so that all iterations are very successful.
In \Cref{step:sk-computation}, we use the analytical solution \(s_k = -B_k^{-1} \nabla f(x_k)\) of~\eqref{eq:pb-B-gen} given by \Cref{lem:sol-pb-B-gen} in order to avoid rounding errors occurring in a subproblem solver for~\eqref{eq:model-k}.
This expression of \(s_k\) satisfies the trust-region constraint by construction thanks to \Cref{lem:sk-inequality}.
The modified TR implementation is available from \url{https://github.com/geoffroyleconte/RegularizedOptimization.jl/tree/unbounded}.

We set \(p = 1/10\), \(\alpha = \beta = 10^{+16}\), \(\gamma_3 = 3\), \(\Delta_{\max} = 10^3\) and \(\epsilon = 1/10\), so that \(k_{\epsilon} = 166\).
We observe that TR converges in precisely \(166\) iterations.
With \(\epsilon = 1/20\), we obtain the convergence of TR in precisely \(k_\epsilon = 778\) iterations.

In order to make the oscillations of \(f'\) clearly visible, \Cref{fig:f-g} shows plots of \(f\) and \(f'\) over \([0, x_{k_{\epsilon}+1}]\) with \(\epsilon = 1/3\).
\Cref{tbl:theor-vals-crit}  shows the theoretical values of \(\nu_k^{-1/2}\xi_{\mathrm{cp}}(\Delta_k; x_k, \nu_k)^{1/2} = |g_k|\) according to~\eqref{eq:nuinv-xi-grad}.
TR converges in \(11\) iterations and produces the logs in \Cref{fig:tr-logs} that align with these theoretical values.
Note that \(\rho_k = 2\), as predicted by~\eqref{eq:rho-k-2val}, and therefore, that each iteration is successful.

\begin{table}[H]%
  \centering
  \caption{%
    \label{tbl:theor-vals-crit}
    Rounded theoretical values of \(\nu_k^{-1/2}\xi_{\mathrm{cp}}(\Delta_k; x_k, \nu_k)^{1/2}\) for \(\epsilon = 1/3\).
  }
  \begin{tabular}{|c||c|c|c|c|c|c|c|c|c|c|c|c|}%
    \hline
    \(k\)                                                         & \(0\)    & \(1\)    & \(2\)    & \(3\)    & \(4\)    & \(5\)    & \(6\)    & \(7\)    & \(8\)    & \(9\)    & \(10\)   & \(11\)   \\
    \hline
    \(\nu_k^{-1/2}\xi_{\mathrm{cp}}(\Delta_k; x_k, \nu_k)^{1/2}\) & \(0.67\) & \(0.64\) & \(0.61\) & \(0.58\) & \(0.55\) & \(0.52\) & \(0.48\) & \(0.45\) & \(0.42\) & \(0.39\) & \(0.36\) & \(0.33\) \\
    \hline
  \end{tabular}
\end{table}

\begin{figure}[H]%
  \centering
  \caption{%
    \label{fig:tr-logs}
    TR logs with \(\epsilon = 1/3\).
    \emph{outer} denotes the iteration number, \emph{inner} is the number of iterations performed by the subsolver to solve~\eqref{eq:model-k} with the model in~\eqref{eq:def-model-quad}, \emph{\(\sqrt{}\xi cp / \sqrt{} \nu\)} is \(\nu_k^{-1/2}\xi_{\mathrm{cp}}(\Delta_k; x_k, \nu_k)^{1/2}\), \emph{\(\sqrt{}\xi\)} is the numerator of~\eqref{eq:rhok}, \emph{\(\|s\|\)} is \(\|s_k\|\), and the remaining columns refer unambiguously to data used in \Cref{alg:tr-nonsmooth}.
  }
  \begin{jllisting}
outer    inner     f(x)     h(x) √ξcp/√ν      √ξ        ρ       Δ     ‖x‖     ‖s‖    ‖Bₖ‖
    1        1  5.3e+00  0.0e+00 6.7e-01 4.7e-01  2.0e+00 1.0e+00 0.0e+00 6.7e-01 1.0e+00
    2        1  4.9e+00  0.0e+00 6.4e-01 4.5e-01  2.0e+00 3.0e+00 6.7e-01 6.4e-01 1.0e+00
    3        1  4.5e+00  0.0e+00 6.1e-01 4.1e-01  2.0e+00 9.0e+00 1.3e+00 5.7e-01 1.1e+00
    4        1  4.1e+00  0.0e+00 5.8e-01 3.9e-01  2.0e+00 2.7e+01 1.9e+00 5.2e-01 1.1e+00
    5        1  3.8e+00  0.0e+00 5.5e-01 3.6e-01  2.0e+00 8.1e+01 2.4e+00 4.7e-01 1.1e+00
    6        1  3.6e+00  0.0e+00 5.2e-01 3.4e-01  2.0e+00 2.4e+02 2.9e+00 4.4e-01 1.2e+00
    7        1  3.4e+00  0.0e+00 4.8e-01 3.1e-01  2.0e+00 7.3e+02 3.3e+00 4.1e-01 1.2e+00
    8        1  3.2e+00  0.0e+00 4.5e-01 2.9e-01  2.0e+00 1.0e+03 3.7e+00 3.7e-01 1.2e+00
    9        1  3.0e+00  0.0e+00 4.2e-01 2.7e-01  2.0e+00 1.0e+03 4.1e+00 3.4e-01 1.2e+00
   10        1  2.8e+00  0.0e+00 3.9e-01 2.5e-01  2.0e+00 1.0e+03 4.4e+00 3.2e-01 1.2e+00
   11        1  2.7e+00  0.0e+00 3.6e-01 2.3e-01  2.0e+00 1.0e+03 4.7e+00 2.9e-01 1.3e+00
   12        1  2.6e+00  0.0e+00 3.3e-01                  1.0e+03 5.0e+00 2.6e-01 1.3e+00
TR: terminating with √ξcp/√ν = 0.3333333333333333
"Execution stats: first-order stationary"
  \end{jllisting}
\end{figure}

\begin{figure}%
  \centering
  \includetikzgraphics{f-plot-1-3}
  \hspace{2em}  
  \includetikzgraphics{g-plot-1-3}
  \\
  \hspace{1.1em}
  \includetikzgraphics{x-plot-1-3}
  \hfill
  \includetikzgraphics{s-plot-1-3}
  \caption{%
    \label{fig:f-g}
    Illustration of example~\eqref{eq:def-f} with \(\epsilon = 1/3\).
    Top row: values of \(f\) (left) and of \(f'\) (right) for \(x \in [0, x_{k_{\epsilon}+1}]\).
    Bottom row: iterates \(x_k\) (left) and steps \(s_k\) (right) for \(k \in [0, k_{\epsilon}+1]\).
  }
\end{figure}

The code to run this experiment is available at \url{https://github.com/geoffroyleconte/docGL/blob/master/regularized-opt/test-unbounded-hess.jl}.
By making similar changes to the algorithm TRDH \citep{leconte-orban-2024}, which can be found at the same URL, we obtain the same number of iterations.

\section{Discussion}%
\label{sec:discussion}

We have shown that it is possible to establish convergence and sharp worst-case evaluation complexity of \Cref{alg:tr-nonsmooth} in the presence of unbounded Hessian approximations \(B_k\), provided they do not grow too fast---c.f., \Cref{asm:hess-approx-growth}.
We established that the complexity bound can be attained, and we gave an example of a function for which it was attained, both theoretically and numerically.

\citet{aravkin-baraldi-orban-2022} compare the performance of \Cref{alg:tr-nonsmooth} to other frameworks, but use a formula for \(\nu_k\) that assumes that \(\{B_k\}\) remains bounded.
Their implementation uses limited-memory SR1 and BFGS approximations.
As it happens, such limited-memory approximations do remain bounded under standard assumptions; see \citep{burdakov-gong-zikrin-yuan-2017} for LBFGS\@.
The fact that LSR1 approximations remain bounded was not known to us at the time of writing \citep{aravkin-baraldi-orban-2022}.
However, an early version of that manuscript contained a procedure to maintain bounds on the extreme eigenvalues of such an approximation, and skip the update if those bounds became too large---see Section~\(4.2\) in \url{https://arxiv.org/pdf/2103.15993v1}.
We only realized later that that very analysis of the extreme eigenvalues shows that LSR1 approximations remain bounded provided that the sequence of initial matrices remains bounded, which is the case in the experiments of \citep{aravkin-baraldi-orban-2022}.

When \(p = 1\) in \Cref{asm:hess-approx-growth} or the growth of \(\|B_k\|\) is not governed by the number of successful iterations, it may still be possible to establish convergence in the sense that \(\liminf \nu_k^{-1/2} \xi_{\mathrm{cp}}(\Delta_k; x_k, \nu_k) = 0\) as in \citep[\S\(8.4.1.2\)]{conn-gould-toint-2000}, where the main assumption is~\eqref{eq:asm-sum-invmaxBj-inf}.
Generalizations of \Cref{asm:hess-approx-growth} might replace \(\sigma_k\) with \(k\), to account for situations where model Hessians are updated on unsuccessful iterations, or by a positive function \(\phi(\sigma_k)\) or \(\phi(k)\).
In view of~\eqref{eq:asm-sum-invmaxBj-inf}, such \(\phi\) would have to satisfy
\[
  \sum_{k=0}^{\infty} \frac{1}{1 + \max_{0 \leq j \leq k} \phi(j)^p} = \infty.
\]
Under the simplifying, but reasonable, assumption that \(\phi\) is continuous and nondecreasing, it would be necessary and sufficient that
\[
  \int_1^{\infty} \frac{1}{1 + \phi(t)^p} \, \mathrm{d}t = \infty.
\]
We expect that sharp worst-case evaluation complexity bounds also hold for such more general cases.

Another possible extension of the present work would be to analyze the worst-case evaluation complexity of AR\(p\)-type methods in the presence of potentially unbounded model Hessians.

Although \Cref{alg:tr-nonsmooth} does not reduce to the ``standard'' trust-region method in the case where \(h = 0\)---by which we mean, e.g., the basic trust-region algorithm of~\cite[Chapter~\(6\)]{conn-gould-toint-2000}---we expect that the techniques of the present paper can be used under \Cref{asm:hess-approx-growth}, or generalizations thereof, to establish similar complexity bounds.
Whether or not quasi-Newton updates satisfy \Cref{asm:hess-approx-growth} under certain assumptions is the subject of ongoing research.

\small
\subsection*{Acknowledgements}

We express our sincere gratitude to three anonymous referees and the associate editor, whose insightful questions, comments and suggestions significantly improved this research.

\subsection*{Competing interests}

We certify that the research submitted here is original, is our own, and is not being evaluated elsewhere for publication.
This work was supported by an NSERC Discovery grant.

\subsection*{Data availability}

The code used to produce the numerical results is available from \url{https://github.com/geoffroyleconte/unbounded-hessian-code}.
The solvers are available from \url{https://github.com/geoffroyleconte/RegularizedOptimization.jl/tree/unbounded}.

\bibliographystyle{abbrvnat}
\bibliography{abbrv,unbounded-hessian}
\end{document}